\newtheorem{definition}{Definition}
\newtheorem{theorem}{Theorem}
\DeclareMathOperator{\rank}{Rank}
\newtheorem{lemma}{Lemma}
\newtheorem{proposition}{Proposition}
\newtheorem{remark}{Remark}
\newtheorem{example}{Example}
\numberwithin{equation}{section}
\def\bs{\expandafter\@gobble\string\\}
\def\lb{\expandafter\@gobble\string\{}
\def\rb{\expandafter\@gobble\string\}}
\def\@pdfauthor{C.V.Radhakrishnan}
\def\@pdftitle{elsarticle.cls -- A documentation}
\def\@pdfsubject{Document formatting with elsarticle.cls}
\def\@pdfkeywords{LaTeX, Elsevier Ltd, document class}
\DeclareRobustCommand{\LaTeX}{L\kern-.26em%
        {\sbox\z@ T%
         \vbox to\ht\z@{\hbox{\check@mathfonts
           \fontsize\sf@size\z@
           \math@fontsfalse\selectfont
          A\,}%
         \vss}%
        }%
     \kern-.15em%
    \TeX}
\begin{document}

\def\testa{This is a specimen document. }
\def\testc{\testa\testa\testa\testa}
\def\testb{\testc\testc\testc\testc\testc}
\long\def\test{\testb\par\testb\par\testb\par}

\pinclude{\copy\contbox\printSq{\LastPage}}

\title{Detectability via observability in a nonuniform framework: dual relationship with controllability and stabilizability}



\author[Huerta]{Ignacio Huerta}
\ead{ignacio.huertan@usm.cl}  
\author[Monzón]{Pablo Monzón}\ead{monzon@fing.edu.uy}  

\address[Huerta]{Departamento de Matem\'atica, Universidad T\'ecnica Federico Santa Mar\'ia, Casilla 110-V, Valpara\'iso, Chile.}  
\address[Monzón]{Facultad de Ingenier\'ia -- Universidad de la Rep\'ublica, Julio Herrera y Reissig 565, Montevideo, Uruguay.}

\begin{abstract}
    In this paper we propose a new observability property for nonautonomous linear control systems in finite dimension: the nonuniform complete observability, which is more general than the uniform complete observability. A dual relationship is established between this new notion of observability and the recently defined nonuniform complete controllability property, with the aim of obtaining the main result, which proves that nonuniform complete observability guarantees the nonuniform exponential detectability, a concept related to exponential stability in this framework.
\end{abstract}

\begin{keyword}
    Nonautonomous linear system; nonuniform complete observability; nonuniform complete controllability; nonuniform exponential detectability and stabilizability; nonuniform bounded growth  
\end{keyword}


\date{\today}
\maketitle


\section{Introduction}\label{sec-introduction}
In this work we deal with linear time-varying systems of the form
\begin{subequations}
	\begin{empheq}[left=\empheqlbrace]{align}
		& \dot{x}(t)=A(t)x(t)+B(t)u(t), \label{LTIsystem1a} \\
		& y(t)=C(t)x(t)+D(t)u(t), \label{LTIsystem1b}
	\end{empheq}
\end{subequations}
where $x(t)\in\mathbb{R}^{n}$ is the \textit{state} of the system, $u(t)\in\mathbb{R}^{p}$ is the \textit{input}, $y(t)\in\mathbb{R}^{m}$ is the \textit{output} and $A(t), B(t), C(t), D(t)$ are measurable, bounded on finite intervals, matrix valued functions from $\mathbb R$ to $M_{n\times n}(\mathbb R)$, $M_{n\times p}(\mathbb R)$, $M_{m\times n}(\mathbb R)$ and $M_{m\times p}(\mathbb R)$ respectively. On the one hand, we present new results on observability, i.e., the ability of the system to infer in finite time the actual or the initial state, through the observation of the output and the input, while on the other hand we present a result on detectability, which allows us to design an asymptotically estimator of the state. The article focus on nonuniform time-varying linear systems and extends the original results of Rudolph Kalman \cite{Kalman}, introducing a new class of systems: nonuniform completely observable systems. 

In a previous work we explore a new controllability property \cite{HMR} and we related it with the classical Kalman definitions and new results coming from the recent qualitative theory of linear nonautonomous differential equations: the bounded growth property and the dichotomies theory \cite{Palmer}. In that work, we were able to present the Nonuniform Complete Controllability property which is more general than the classical Uniform Complete Controllability and less restrictive than simple Complete Controllability \cite{Kalman}. Besides, we characterized this new concept in terms of Gramians and we presented some non-trivial examples. In this manuscript we extend those ideas to the observability problem, using the adjoint and dual systems of the original one.

We recall the following notation which will be used in this paper. The inner product of two vectors $x,y\in \mathbb{R}^{n}$ is denoted by
$\langle y, x\rangle=x^{T}y$ and the Euclidean norm of a vector will be denoted by $|x|=\sqrt{\langle x,x\rangle}$. A symmetric matrix $M=M^{T}\in M_{n}(\mathbb{R})$ is semi--positive definite if
$x^{T}Mx =\langle M x, x\rangle  \geq 0$ for any real vector $x\neq 0$, and this property will be denoted as $M \geq 0$. In case that the above inequalities are strict, we say that $M=M^{T}$ is positive definite. Given two matrices $M, N\in M_{n}(\mathbb{R})$, we write $M\leq N$ if  $N-M\geq0$. 

\subsection{Observability}
Observability analysis of \eqref{LTIsystem1a}-\eqref{LTIsystem1b} can be reduced to the study of the simpler system
\begin{subequations}
	\begin{empheq}[left=\empheqlbrace]{align}
		& \dot{x}(t)=A(t)x(t), \label{control1a} \\
		& y(t)=C(t)x(t), \label{control1b}
	\end{empheq}
\end{subequations}
and the mapping from the state to the output and its inverse, which allows the possibility of recovering the state from the output \cite{Callier}. We formalize some definitions. The system \eqref{control1a}-\eqref{control1b} is {\it observable at time $t_{0}$} if and only if there exists
$t_{f}>t_{0}$ such that $M(t_{0},t_{f})>0$. In addition, it is {\it completely observable} (CO) if and only if for 
any $t_{0}$, there exists $t_{f}>t_{0}$ such that $M(t_{0},t_{f})>0$. We refer the reader to \cite{Anderson67,Kalman,Kreindler} for a detailed description.

For linear time-invariant systems, observability is equivalent to the complete rank of the observability matrix \cite{Kalman}:
$$
\mathcal{O}=\left[\begin{array}{c}
	C \\
	C A \\
	C A^2 \\
	\vdots \\
	C A^{n-1}
\end{array}\right]\in M_{nm\times n}(\mathbb{R})
$$
and also to the invertibility of the observability Gramian
 $$
  M(t_0,t_1)=\displaystyle\int_{t_0}^{t_1}e^{A^{T}(s-t_0)}C^{T}Ce^{A(s-t_0)}\;ds,
  $$
for any value $t_1>t_0$. The former idea can not be directly extended to the time-varying case, but the latter leads to the invertibility of the general observability Gramian
\begin{equation}
	\label{gramianobservability}
	M(t_0,t_f)=\displaystyle\int_{t_0}^{t_f}\Phi_{A}^{T}(s,t_0)C^{T}(s)C(s)\Phi_{A}(s,t_0)\;ds.
\end{equation}
where $\Phi_{A}(s,t_0)$ is the transition matrix of \eqref{control1a}. 

A concept that is more restrictive than complete observability, and which in turn implies it, corresponds to the case where $t_0=t$ and $t_f=t+\sigma$, for a fixed $\sigma>0$, which corresponds to the \textit{uniform complete observability} (UCO), whose formal definition will be given later (see \cite[Definition 3.3.3]{Ioannou10.5555/211527} and \cite[Section 1.5]{sastry10.5555/63437} and \cite[Section 2]{SilvermanAnderson} for more detailed references).

\subsection{Detectability} The study of detectability for linear systems has a long tradition, closely tied to the design of observers and the stabilization of dynamical systems via output feedback. In the time-invariant setting, detectability is fully characterized by algebraic conditions and its equivalence to the existence of an exponentially convergent observer is well established. However, when moving to linear time-varying systems, the problem becomes significantly more intricate: the explicit time dependence of the dynamics demand a reformulation of both observability and detectability in terms of the state-transition operator
and associated Lyapunov and Gramian criteria.

By considering the system \eqref{control1a}-\eqref{control1b}, the exponential detectability consists of guaranteeing the existence of an observer $L:\mathbb{R}\to M_{n\times p}(\mathbb{R})$ described by the equation
\begin{equation*}
\dot{\hat{x}}(t)=A(t)\hat{x}(t)+L(t)[y(t)-C(t)\hat{x}(t)],
\end{equation*}
such that the linear system
\begin{equation}
        \label{SistError}
        \dot{e}(t)=[A(t)-L(t)C(t)]e(t)
    \end{equation}
will be exponentially stable, with $e(t)=x(t)-\dot{\hat{x}}(t)$ being the estimation error (see \cite{RAVI1992455}). Given the nature of different stability behaviors in the time-varying framework, uniform exponential detectability is defined based on the uniform exponential stability of the system \eqref{SistError} (see \cite{Tranninger}). For further details on the notion of detectability in this context, we refer the reader to 
\cite{andersonmoore, tranninger2}.

\subsection{Novelty of this work}
In this paper we introduce and characterize the concept of nonuniform complete observability (NUCO), which is more general than the classical Kalman property of uniform complete observability, but more restrictive than complete observability. We also present the corresponding nonuniform exponential detectability idea (NUED), related to the existence of an nonuniformly exponentially stable observer.

The first novel result corresponds to Theorems \ref{adjuntoNU} and \ref{dualNU}, in which, based on the concepts of adjoint and dual systems, a close relationship on nonuniform complete controllability and nonuniform complete observability is established, allowing to flow from one to the other and their respective consequences. Then, Theorem \ref{p2l3obsNOUNIFORME} follows directly, extending a well-known uniform property related to the observability.

The second novel result of this work is Theorem \ref{ObsImplicaDetect}, which states that NUCO implies NUED. Several intermediate and auxiliary results complete the work, including examples of NUCO systems and a counterxample of the reciprocal of Theorem \ref{ObsImplicaDetect}.

\subsection{Structure of the article}
Section \ref{sec-controlability} reviews the main results on uniform and nonuniform complete controllability, including the Gramian characterization and the relationship with the bounded growth property.  In Section \ref{sec-observability}, the idea of complete observability is introduced, in both uniform and nonuniform senses. After a brief review on dual and adjoint systems, the main relationships between nonuniform controllability and nonuniform observability are presented. Section \ref{sec-examples} presents some examples showing that nonuniform complete observability is effectively an intermediate stage between uniform complete observability and complete observability. Nonuniform exponential detectability is addressed in Section \ref{sec-detectability}, along with the idea of nonuniform exponential stability. Several auxiliary results are presented here and are used in Section \ref{sec-main-result}, where the main result of the article is developed, along with a counterexample of its reciprocal. Finally, some conclusions and possible research lines are commented.

\section{Controllability: basic notions and preliminaries}\label{sec-controlability}

\subsection{Uniform framework}
The study of the controllability of a control system has important consequences either at the level of stabilizing a system or establishing a relationship with observability and there by moving from one concept to another to address different approaches. In this sense, this section will raise the most relevant aspects of the concept of complete controllability, uniform complete controllability and a new property recently introduced in \cite{HMR}, namely, nonuniform complete controllability. The whole section will serve as an input to explore a new property of observability.

The concept of controllability was introduced by R. Kalman in \cite{Kalman} for the linear control
system
\begin{equation}
    \label{LTVControlabilidad}
    \dot{x}(t)=A(t)x(t)+B(t)u(t).
\end{equation}
The following definition corresponds to a reminder of the most important aspects of this concept (see \cite{Anderson,Sontag}):
\begin{definition}
The state $x_{0}\in \mathbb{R}^{n}$ of the control system \eqref{LTVControlabilidad} is controllable at time $t_{0}\in\mathbb{R}$, if there exists an input $u\colon [t_{0},t_{f}]\to \mathbb{R}^{p}$ such that $x(t_{f},t_{0},x_{0},u)=0$, where 
\begin{equation*}\label{eq:solution}
x(t,t_{0},x_0,u)=\Phi_{A}(t,t_{0})x_0+\int_{t_0}^t\Phi_{A}(t,\tau)B(\tau)u(\tau)d\tau   
\end{equation*}
is the solution of \eqref{LTVControlabilidad}. In addition, the control system \eqref{LTVControlabilidad} is: 
\begin{itemize}
\item[a)] \textbf{Controllable at time $t_{0}\in\mathbb{R}$} if any state $x_{0}$ is controllable at time $t_{0}\in\mathbb{R}$,
\item[b)] \textbf{Completely controllable} \textnormal{(CC)} if it is controllable at any time $t_{0}\in\mathbb{R}$.
\end{itemize}
\end{definition}

There is another perspective to establish the above concepts, which is given by the Gramian matrix of controllability, usually defined by
\begin{equation*}
W(a,b)=\displaystyle\int_{a}^{b}\Phi_{A}(a,s)B(s)B^{T}(s)\Phi_{A}^{T}(a,s)\;ds.
\end{equation*}

Specifically, the control system \eqref{LTVControlabilidad} is {\it controllable at time $t_{0}\in\mathbb{R}$} if and only if there exists
$t_{f}>t_{0}$ such that $W(t_{0},t_{f})>0$. Moreover, is {\it completely controllable} if and only if for 
any $t_{0}\in\mathbb{R}$, there exists $t_{f}>t_{0}$ such that $W(t_{0},t_{f})>0$. We refer the reader to \cite{Anderson67,Kreindler} for a detailed description of this topics.

A stronger property of controllability, also due to R. Kalman \cite{Kalman,Kalman69}, is given by the \textit{uniform
complete controllability} (UCC). In this case, there exists a parameter $\sigma>0$ such that “one can always
transfer $x$ to 0 and 0 to $x$ in a finite length $\sigma$ of time; moreover, such a transfer can never take place
using an arbitrarily small amount of control energy” \cite[p.157]{Kalman}. Kalman stated the property in
terms of the Gramian. 

\begin{definition} \label{controluniforme}   The linear control system \eqref{LTVControlabilidad} is said to be uniformly completely controllable on $J\subseteq \mathbb{R}$ if there exists a fixed constant $\sigma>0$ and positive numbers $\alpha_0(\sigma)$, $\beta_{0}(\sigma)$, $\alpha_1(\sigma)$ and $\beta_1(\sigma)$ such that the following relations hold for all $t\in J$: 
\begin{equation}
\label{UC1}
0<\alpha_0(\sigma)I\leq W(t,t+\sigma)\leq \alpha_1(\sigma)I, 
\end{equation}
\begin{equation}
\label{UC2}
0<\beta_0(\sigma)I\leq K(t,t+\sigma) \leq \beta_1(\sigma)I,
\end{equation}
where 
\begin{equation}\label{definicionK}
K(t,t+\sigma)=\Phi_{A}(t+\sigma,t)W(t,t+\sigma)\Phi_{A}^{T}(t+\sigma,t).
\end{equation}
\end{definition}

We emphasize the fact that the set $J$ can be thought of as $\mathbb{R}_{0}^{+}$, $\mathbb{R}_{0}^{-}$ or $\mathbb{R}$, being a distinguished element throughout this paper.
The property of uniform complete controllability have noticeable consequences, which have been pointed out by R. Kalman
in \cite{Kalman} and are summarized by the following result, whose proof is sketched in \cite[p.157]{Kalman} (see also \cite{Silverman}):
\begin{proposition}
\label{Prop1}
If the linear control system \eqref{LTVControlabilidad} is \textnormal{UCC}, then: 
\begin{itemize}
\item[i)] The inequalities \eqref{UC1}--\eqref{UC2} are also verified for any $\sigma'>\sigma$. 
\item[ii)] There exists a function $\alpha\colon [0,+\infty)\to (0,+\infty)$, with $\alpha(\cdot)\in\mathcal{B}$\footnote{$\mathcal{B}$ denote the set of functions $\alpha\colon \mathbb{R}\to \mathbb{R}$ mapping bounded sets into bounded sets.}, such that the plant of system \eqref{LTVControlabilidad} has a transition matrix verifying the property
\begin{equation}
\label{BG}
\|\Phi_{A}(t,s)\| \leq \alpha(|t-s|) \quad \textnormal{for all $t,s \in J$}.
\end{equation}
\end{itemize}
\end{proposition}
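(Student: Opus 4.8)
The plan is to make the two uniform estimates on the transition matrix over a window of length $\sigma$ the backbone of the whole argument, and to deduce both items from them together with the monotonicity of the Gramians. Writing $P=\Phi_A(t+\sigma,t)$, the identity \eqref{definicionK} reads $K(t,t+\sigma)=P\,W(t,t+\sigma)\,P^{T}$. First I would combine $K\le\beta_1 I$ with $W\ge\alpha_0 I$: since $W\ge\alpha_0 I$ gives $PWP^{T}\ge\alpha_0 PP^{T}$, we get $\alpha_0 PP^{T}\le K\le\beta_1 I$, hence $PP^{T}\le(\beta_1/\alpha_0)I$ and $\|\Phi_A(t+\sigma,t)\|\le\sqrt{\beta_1/\alpha_0}=:c_+$. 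Dually, from $K\ge\beta_0 I$ and $W\le\alpha_1 I$ one has $\beta_0 I\le K\le\alpha_1 PP^{T}$, so $PP^{T}\ge(\beta_0/\alpha_1)I$; this bounds the smallest singular value of $P$ from below, whence $\|\Phi_A(t,t+\sigma)\|=\|P^{-1}\|\le\sqrt{\alpha_1/\beta_0}=:c_-$. These two inequalities, valid uniformly for all $t\in J$, are the crucial step and everything else is bookkeeping on top of them. I would also record the alternative representation $K(a,b)=\int_a^b \Phi_A(b,s)B(s)B^{T}(s)\Phi_A(b,s)^{T}\,ds$, obtained by pushing $\Phi_A(b,a)$ inside the integral in $W(a,b)$, since it makes the behaviour of $K$ in the endpoint transparent.

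For the lower bounds in part i) I would use monotonicity of each Gramian under enlargement of its natural endpoint. For $\sigma'>\sigma$ the integrand of $W$ is positive semidefinite, so $W(t,t+\sigma')\ge W(t,t+\sigma)\ge\alpha_0 I$; and via the backward representation $K(t,t+\sigma')=\int_t^{t+\sigma'}\Phi_A(t+\sigma',s)B B^{T}\Phi_A(t+\sigma',s)^{T}\,ds\ge K(t+\sigma'-\sigma,t+\sigma')\ge\beta_0 I$, restricting to the last window of length $\sigma$. For the upper bound on $W(t,t+\sigma')$ I would split $[t,t+\sigma']$ into $N=\lceil\sigma'/\sigma\rceil$ consecutive windows, and on the $j$-th one factor $\Phi_A(t,s)=\Phi_A(t,t+j\sigma)\Phi_A(t+j\sigma,s)$, so that block equals $\Phi_A(t,t+j\sigma)\,W(t+j\sigma,\cdot)\,\Phi_A(t,t+j\sigma)^{T}\le\alpha_1\|\Phi_A(t,t+j\sigma)\|^{2}I\le\alpha_1 c_-^{2j}I$, using $j$ backward $\sigma$-steps; summing over $j$ yields $W(t,t+\sigma')\le\alpha_1\bigl(\sum_{j=0}^{N-1}c_-^{2j}\bigr)I$. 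The upper bound for $K(t,t+\sigma')$ then comes from $K(t,t+\sigma')=\Phi_A(t+\sigma',t)\,W(t,t+\sigma')\,\Phi_A(t+\sigma',t)^{T}$ once the growth bound of part ii) controls $\|\Phi_A(t+\sigma',t)\|$.

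For part ii) I would propagate the window estimates by composition. Given $t\ge s$, set $m=\lfloor(t-s)/\sigma\rfloor$ and factor $\Phi_A(t,s)$ into $m$ forward $\sigma$-blocks, each of norm $\le c_+$, times a leftover factor over an interval of length $<\sigma$; symmetrically for $s\ge t$ with $c_-$. Setting $c=\max(c_+,c_-)$ this gives $\|\Phi_A(t,s)\|\le c^{\,\lceil|t-s|/\sigma\rceil}$ up to the leftover factor, so that a function of the form $\alpha(\tau)=\rho\,c^{\,\tau/\sigma}$ (growing, hence in $\mathcal{B}$) dominates the full $\sigma$-block contribution, establishing \eqref{BG}.

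The main obstacle is precisely that leftover transition over a sub-window of length $r<\sigma$: it is not controlled by the full-window estimates alone, because composing full windows (forward or backward) can never produce a transition over a proper sub-interval. Here the uniform Gramian bounds must be combined with the hypothesis that $A(\cdot)$ is bounded on finite intervals to obtain a bound on $\|\Phi_A(a+r,a)\|$ uniform in the base point $a$ for $0\le r\le\sigma$; this is the delicate point and is exactly where Kalman's sketch \cite[p.157]{Kalman} and Silverman's argument \cite{Silverman} concentrate the work. Once such a uniform sub-window bound is secured, it folds into the constant $\rho$ above and closes both the growth estimate \eqref{BG} and the remaining upper bound on $K(t,t+\sigma')$.
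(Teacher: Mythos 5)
Your full-window estimates $\|\Phi_A(t+\sigma,t)\|\le\sqrt{\beta_1/\alpha_0}$ and $\|\Phi_A(t,t+\sigma)\|\le\sqrt{\alpha_1/\beta_0}$, obtained by playing \eqref{UC1} against \eqref{UC2} through \eqref{definicionK}, are correct and are indeed the backbone of the classical argument, as are the monotonicity and block-composition steps in part i). The genuine gap is exactly the point you flag and then leave open: the transition over a proper sub-window $[a,a+r]$, $0<r<\sigma$. Worse, the mechanism you propose for closing it --- combining the Gramian bounds with the hypothesis that $A(\cdot)$ is bounded on finite intervals --- cannot work: boundedness on finite intervals gives no estimate that is uniform in the base point $a$ (consider $A(t)=t$), whereas \eqref{BG} requires uniformity over all of $J$. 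Since both \eqref{BG} and your upper bound on $K(t,t+\sigma')$ in part i) rest on this leftover factor, the proof as written is incomplete, and the suggested route to completion is a dead end.

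The sub-window bound in fact follows from \eqref{UC1}--\eqref{UC2} alone, with no hypothesis on $A$ beyond local integrability; this is the actual content of Kalman's sketch. Split $W(a,a+\sigma)=W(a,a+r)+\Phi_A(a,a+r)\,W(a+r,a+\sigma)\,\Phi_A^{T}(a,a+r)$ and note $W(a+r,a+\sigma)\le W(a+r,a+r+\sigma)\le\alpha_1 I$; on the other hand the backward representation $K(a,b)=\int_a^b\Phi_A(b,s)B(s)B^{T}(s)\Phi_A^{T}(b,s)\,ds$ gives $\Phi_A(a+r,a)W(a,a+r)\Phi_A^{T}(a+r,a)=K(a,a+r)\le K(a+r-\sigma,a+r)\le\beta_1 I$. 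For a unit vector $y$ set $z=\Phi_A^{T}(a+r,a)y$; then $z^{T}W(a,a+r)z=y^{T}K(a,a+r)y\le\beta_1$ and $\Phi_A^{T}(a,a+r)z=y$, so $\alpha_0|z|^{2}\le z^{T}W(a,a+\sigma)z\le\beta_1+\alpha_1|y|^{2}$, i.e. $\|\Phi_A(a+r,a)\|\le\sqrt{(\alpha_1+\beta_1)/\alpha_0}$. The symmetric splitting of $K(a+r-\sigma,a+r)$ over $[a+r-\sigma,a]\cup[a,a+r]$, combined with $\beta_0 I\le K(a+r-\sigma,a+r)$ and $W(a,a+r)\le\alpha_1 I$, yields $\|\Phi_A(a,a+r)\|\le\sqrt{(\alpha_1+\beta_1)/\beta_0}$. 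With these two uniform sub-window bounds your constant $\rho$ is secured and the rest of your argument (composition of $\sigma$-blocks for \eqref{BG}, and the remaining bound on $K(t,t+\sigma')$) goes through.
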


The above result deserves some comments:
\begin{remark}
The statement \textnormal{i)} of Proposition \textnormal{\ref{Prop1}} implies the existence of positive functions
$\alpha_{0},\alpha_{1},\beta_{0},\beta_{1}\colon [\sigma,+\infty) \to (0,+\infty)$ related to
\eqref{UC1} and \eqref{UC2}. These maps are used to construct the map $\alpha(\cdot)$ in \eqref{BG}.

\end{remark}

\begin{remark}
Kalman's article \textnormal{\cite{Kalman}} does not provide additional properties for the map $\alpha(\cdot)$ from \eqref{BG}. However, as done in the work of B. Zhou \textnormal{\cite[Lemma 4]{Zhou-21}} the condition \eqref{BG} with $\alpha(\cdot)\in \mathcal{B}$ is referred as the \textbf{Kalman's condition}. 
\end{remark} 

Based on Lemma 6 of \cite{HMR}, there is an equivalence between the Kalman's condition and the property of bounded growth, which is detailed as follows:

\begin{definition} 
 \label{UBGT}
The linear system \eqref{control1a} has a uniform bounded growth on the interval $J\subseteq\mathbb{R}$ if there exist constants $K_{0}>0$ and $a>0$ such that its transition matrix satisfies
\begin{equation}
\label{PhiboundUniform}
\|\Phi_{A}(t,\tau)\|\leq K_{0}e^{a|t-\tau|}\quad \textnormal{for any $t,\tau\in J$}.
\end{equation}
\end{definition}

The next result, whose proof has been sketched by Kalman in \cite[p.157]{Kalman}, des\-cribes a more surprising relation between the properties of uniform complete controllability and the Kalman's condition:
\begin{proposition}
\label{Prop2}
If any two of the properties \eqref{UC1}, \eqref{UC2} and \eqref{BG} hold, the remaining one is also true.
\end{proposition}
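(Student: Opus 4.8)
The plan is to establish the three implications separately, the common algebraic engine being the congruence \eqref{definicionK}, namely $K(t,t+\sigma)=\Phi_{A}(t+\sigma,t)\,W(t,t+\sigma)\,\Phi_{A}^{T}(t+\sigma,t)$, which ties the two Gramians to the transition matrix over a single window; equivalently $W(t,t+\sigma)=\Phi_{A}(t,t+\sigma)\,K(t,t+\sigma)\,\Phi_{A}^{T}(t,t+\sigma)$. Two of the three implications follow almost immediately from this link, while $\eqref{UC1}+\eqref{UC2}\Rightarrow\eqref{BG}$ carries essentially all the difficulty, because it is the only one that requires controlling $\Phi_{A}$ \emph{inside} a window and not merely at its endpoints.

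For $\eqref{UC1}+\eqref{BG}\Rightarrow\eqref{UC2}$ I would argue on quadratic forms. Writing $\Phi=\Phi_{A}(t+\sigma,t)$, the congruence gives $x^{T}K(t,t+\sigma)x=(\Phi^{T}x)^{T}W(t,t+\sigma)(\Phi^{T}x)$ for every $x$. The upper bound in \eqref{UC1}, together with $|\Phi^{T}x|\le\|\Phi\|\,|x|$ and the Kalman bound $\|\Phi_{A}(t+\sigma,t)\|\le\alpha(\sigma)$, yields $K(t,t+\sigma)\le\alpha_{1}\alpha(\sigma)^{2}I$; the lower bound in \eqref{UC1} with $|\Phi^{T}x|\ge|x|/\|\Phi^{-1}\|$ and $\|\Phi^{-1}\|=\|\Phi_{A}(t,t+\sigma)\|\le\alpha(\sigma)$ yields $K(t,t+\sigma)\ge(\alpha_{0}/\alpha(\sigma)^{2})I>0$. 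These bounds are uniform in $t$, so \eqref{UC2} holds. The implication $\eqref{UC2}+\eqref{BG}\Rightarrow\eqref{UC1}$ is the mirror image: starting from $W=\Phi^{-1}K\Phi^{-T}$ one repeats the computation with the roles of $W$ and $K$ exchanged, again invoking $\alpha(\cdot)$ only at the single length $\sigma$.

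For the hard implication I would first extract endpoint estimates. From $K=\Phi W\Phi^{T}$ with $K\le\beta_{1}I$ and $W\ge\alpha_{0}I$ one gets $\alpha_{0}|\Phi^{T}x|^{2}\le x^{T}Kx\le\beta_{1}|x|^{2}$, hence $\|\Phi_{A}(t+\sigma,t)\|\le\sqrt{\beta_{1}/\alpha_{0}}$; symmetrically $W=\Phi^{-1}K\Phi^{-T}$ with $W\le\alpha_{1}I$, $K\ge\beta_{0}I$ gives $\|\Phi_{A}(t,t+\sigma)\|\le\sqrt{\alpha_{1}/\beta_{0}}$, both uniform in $t$. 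Chaining over a grid of step $\sigma$ then bounds $\|\Phi_{A}(t,s)\|$ whenever $|t-s|$ is an integer multiple of $\sigma$. The main obstacle is the passage from this grid estimate to a bound valid for all $t,s$: the endpoint bounds say nothing about intermediate growth, and a naive product argument genuinely fails, since a length-$\sigma$ bound on $\|\Phi_{A}(t+\sigma,t)\|$ does not by itself preclude a large excursion of $\|\Phi_{A}(\tau,t)\|$ for $t<\tau<t+\sigma$.

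The resolution is to use the integral structure of the Gramians to compare a sub-window against a full one. Factoring $\Phi_{A}(\tau,s)=\Phi_{A}(\tau,t)\Phi_{A}(t,s)$ inside the integral defining $W(\tau,\tau+\sigma)$ gives the identity $W(\tau,\tau+\sigma)=\Phi_{A}(\tau,t)\bigl[W(t,\tau+\sigma)-W(t,\tau)\bigr]\Phi_{A}^{T}(\tau,t)$; bounding the bracket by $W(t,\tau+\sigma)\le W(t,t+2\sigma)\le\alpha_{1}(2\sigma)I$ via Proposition~\ref{Prop1}(i) and using $W(\tau,\tau+\sigma)\ge\alpha_{0}I$ forces $\Phi_{A}(\tau,t)\Phi_{A}^{T}(\tau,t)\ge(\alpha_{0}/\alpha_{1}(2\sigma))I$, i.e.\ a uniform bound $\|\Phi_{A}(t,\tau)\|\le\sqrt{\alpha_{1}(2\sigma)/\alpha_{0}}$ for $0\le\tau-t\le\sigma$. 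A parallel computation with the reachability Gramian, $\Phi_{A}(\tau,t)\,K(t-\sigma,t)\,\Phi_{A}^{T}(\tau,t)=K(t-\sigma,\tau)\le K(\tau-2\sigma,\tau)\le\beta_{1}(2\sigma)I$ together with $K(t-\sigma,t)\ge\beta_{0}I$, bounds the forward transition $\|\Phi_{A}(\tau,t)\|$ uniformly on sub-windows. Feeding these intra-window estimates into the chaining yields $\|\Phi_{A}(t,s)\|\le K_{0}e^{a|t-s|}$ as in \eqref{PhiboundUniform}, i.e.\ uniform bounded growth, which by Lemma~6 of \cite{HMR} is equivalent to the Kalman condition \eqref{BG}. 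The only remaining delicate point is the behaviour near an endpoint of $J$, where a $2\sigma$-window may protrude; there the missing estimate is supplied directly by the boundedness of $A(\cdot)$ on the finite initial interval.
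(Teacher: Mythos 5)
Your proposal is correct, but note that the paper itself offers no proof of Proposition~\ref{Prop2}: it only records that the argument "has been sketched by Kalman in \cite[p.157]{Kalman}", so there is no in-text proof to compare against. Your reconstruction is essentially the classical Kalman argument and it is sound: the two easy implications via the congruence \eqref{definicionK} are fine, the endpoint bounds $\|\Phi_{A}(t+\sigma,t)\|\le\sqrt{\beta_{1}/\alpha_{0}}$ and $\|\Phi_{A}(t,t+\sigma)\|\le\sqrt{\alpha_{1}/\beta_{0}}$ are the standard consequence of combining a lower bound on one Gramian with an upper bound on the other, and you correctly identify that the real content of $\eqref{UC1}+\eqref{UC2}\Rightarrow\eqref{BG}$ is the intra-window estimate, which you obtain from the sub-window comparison $W(\tau,\tau+\sigma)=\Phi_{A}(\tau,t)\bigl[W(t,\tau+\sigma)-W(t,\tau)\bigr]\Phi_{A}^{T}(\tau,t)$ together with the enlarged-window bounds of Proposition~\ref{Prop1}(i) (whose hypotheses are in force here, since \eqref{UC1} and \eqref{UC2} are both assumed). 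Two harmless imprecisions: first, in the "parallel computation" the displayed relation $\Phi_{A}(\tau,t)K(t-\sigma,t)\Phi_{A}^{T}(\tau,t)=K(t-\sigma,\tau)$ is not an identity but an inequality $\le$ (the difference is the positive semidefinite integral over $[t,\tau]$); since $\le$ is exactly the direction you use, nothing breaks. Second, your endpoint remark should also cover the case $J=\mathbb{R}_{0}^{-}$, where it is the window $[\tau,\tau+\sigma]$ that may protrude on the right; the same local Gronwall bound from the boundedness of $A(\cdot)$ on finite intervals applies. With those caveats the argument is complete and yields \eqref{PhiboundUniform}, hence \eqref{BG}.
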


\subsection{Notion of nouniform complete controllability on $\mathbb{R}_{0}^{+}$}
By considering the proposition \ref{Prop2}, it is sufficient to consider an example of a linear system where the Kalman condition is not verified in order to obtain a system that does not admit uniform complete controllability. From this point of view, on \cite[p. 8]{HMR} it is proved that the system 
$$\dot{x}=[\lambda_0+at\sin(t)]x,\quad \textnormal{with}\; \lambda_0<a<0,$$
does not satisfy the Kalman condition, while this system verify another property, namely, \textit{nonuniform bounded growth}, which corresponds to a generalization of the Kalman condition and, in a way, is the starting point for defining a new type of controllability that adapts to systems that do not necessarily satisfy the Kalman condition.

\begin{definition} 
 \label{NUBGT}
The linear system \eqref{control1a} has a nonuniform bounded growth on the interval $J\subseteq \mathbb{R}$ if there exist constants $K_{0}>0$, $a>0$ and $\eta>0$ such that its transition matrix satisfies
\begin{equation}
\label{Phibound}
\|\Phi_{A}(t,\tau)\|\leq K_{0}e^{\eta |\tau| }e^{a|t-\tau|}\quad \textnormal{for any $t,\tau\in J$}.
\end{equation}
\end{definition}




%


In view of this, in \cite{HMR} a new type of controllability, called \textit{nonuniform complete controllability} (NUCC), is established. In this definition was consider $J=\mathbb{R}_{0}^{+}$.

\begin{definition}
\label{DEFNUCC}
The linear control system \eqref{LTVControlabilidad} is said to be nonuniformly completely controllable on $J=\mathbb{R}_{0}^{+}$ if there exist fixed numbers $\mu_0\geq0$, $\mu_1\geq0$, $\tilde{\mu}_0\geq0$, $\tilde{\mu}_1\geq0$ and functions $\alpha_0(\cdot), \beta_{0}(\cdot),\alpha_1(\cdot),\beta_1(\cdot):[0,+\infty)\to(0,+\infty)$ such that for any $t\in J$, there exists $\sigma_0(t)>0$ with:
\begin{equation}
\label{alpha0alpha1}
    0<e^{-2\mu_0 t}\alpha_0(\sigma)I\leq W(t,t+\sigma)\leq e^{2\mu_1 t}\alpha_1(\sigma)I,
\end{equation}
\begin{equation}
\label{K}
    0<e^{-2\tilde{\mu}_0 t}\beta_0(\sigma) I\leq K(t,t+\sigma)\leq e^{2\tilde{\mu}_1 t}\beta_1(\sigma) I, 
\end{equation}
for every $\sigma\geq \sigma_{0}(t)$. 
\end{definition}

Taking into account the relationship between uniform complete controllability and the Kalman condition \cite{Kalman}, then let us consider the following definition, which will be coupled to the definition of nonuniform complete controllability.

\begin{lemma}
If the control system \eqref{LTVControlabilidad} is nonuniformly completely controllable, then there exist $\nu>0$ and a function  $\alpha(\cdot)\in\mathcal{B}$ satisfying
\begin{equation}
\label{crec-acot}
\|\Phi_{A}(t,\tau)\|\leq e^{\nu|\tau|}\,\alpha(|t-\tau|)  \quad \textnormal{for any $t,\tau\in\mathbb{R}$}.
\end{equation}
\end{lemma}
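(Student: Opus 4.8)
The plan is to mirror Kalman's sketch for Proposition~\ref{Prop1}(ii), but now tracking the nonuniform factors $e^{\pm ct}$ that appear in \eqref{alpha0alpha1}--\eqref{K}; throughout I work on the NUCC domain $J=\mathbb{R}_{0}^{+}$, where $|\tau|=\tau$. The engine of the proof is the algebraic identity \eqref{definicionK}, $K(t,t+\sigma)=\Phi_{A}(t+\sigma,t)\,W(t,t+\sigma)\,\Phi_{A}^{T}(t+\sigma,t)$, which converts the two-sided Gramian estimates into a pointwise bound on the transition matrix over a window of length $\sigma$. Writing $\Phi:=\Phi_{A}(t+\sigma,t)$ and using $x^{T}K(t,t+\sigma)x=(\Phi^{T}x)^{T}W(t,t+\sigma)(\Phi^{T}x)$ for all $x$, I pair the upper estimate for $K$ in \eqref{K} with the lower estimate for $W$ in \eqref{alpha0alpha1} to get $|\Phi^{T}x|^{2}\le e^{2(\mu_0+\tilde\mu_1)t}\tfrac{\beta_1(\sigma)}{\alpha_0(\sigma)}|x|^{2}$, hence, for $\sigma\ge\sigma_0(t)$,
\begin{equation*}
\norm{\Phi_{A}(t+\sigma,t)}\le e^{(\mu_0+\tilde\mu_1)t}\sqrt{\tfrac{\beta_1(\sigma)}{\alpha_0(\sigma)}}.
\end{equation*}
Symmetrically, the lower estimate for $K$ with the upper estimate for $W$ give $|\Phi^{T}x|^{2}\ge e^{-2(\mu_1+\tilde\mu_0)t}\tfrac{\beta_0(\sigma)}{\alpha_1(\sigma)}|x|^{2}$, i.e.\ the inverse bound $\norm{\Phi_{A}(t+\sigma,t)^{-1}}\le e^{(\mu_1+\tilde\mu_0)t}\sqrt{\alpha_1(\sigma)/\beta_0(\sigma)}$ for $\sigma\ge\sigma_0(t)$.

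Next I recast these interval estimates into the shape $e^{\nu\tau}\alpha(|t-\tau|)$. For a forward pair $t>\tau\ge0$ with $t-\tau\ge\sigma_0(\tau)$, taking the initial time to be $\tau$ and $\sigma=t-\tau$ yields at once $\norm{\Phi_A(t,\tau)}\le e^{(\mu_0+\tilde\mu_1)\tau}\sqrt{\beta_1(t-\tau)/\alpha_0(t-\tau)}$. For a backward pair $0\le t<\tau$ with $\tau-t\ge\sigma_0(t)$, I use $\Phi_A(t,\tau)=\Phi_A(\tau,t)^{-1}$, apply the inverse estimate with initial time $t$ and $\sigma=\tau-t$, and absorb $e^{(\mu_1+\tilde\mu_0)t}\le e^{(\mu_1+\tilde\mu_0)\tau}$ (valid since $0\le t<\tau$). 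Setting $\nu:=\max\{\mu_0+\tilde\mu_1,\ \mu_1+\tilde\mu_0\}$ and $\alpha(s):=\max\{\sqrt{\beta_1(s)/\alpha_0(s)},\ \sqrt{\alpha_1(s)/\beta_0(s)}\}$ then establishes \eqref{crec-acot} for every pair whose separation exceeds the relevant threshold; since $\alpha_0,\alpha_1,\beta_0,\beta_1$ are positive, $\alpha$ maps bounded sets into bounded sets, so $\alpha\in\mathcal{B}$.

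What remains is the case of \emph{short} windows, $0<|t-\tau|<\sigma_0(\min\{t,\tau\})$, where Step~1 does not apply directly. Here I would compose through a distant future instant: choose $T\ge\max\{t,\tau\}$ with $T-\tau\ge\sigma_0(\tau)$ and $T-t\ge\sigma_0(t)$ and write $\Phi_A(t,\tau)=\Phi_A(T,t)^{-1}\Phi_A(T,\tau)$, so that $\norm{\Phi_A(t,\tau)}\le\norm{\Phi_A(T,t)^{-1}}\,\norm{\Phi_A(T,\tau)}$ with both factors controlled by the estimates above. The surplus factor $e^{(\mu_1+\tilde\mu_0)t}=e^{(\mu_1+\tilde\mu_0)\tau}e^{(\mu_1+\tilde\mu_0)(t-\tau)}$ contributes, beyond the admissible exponential in $\tau$, only a term depending on the bounded quantity $|t-\tau|$.

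I expect the short-window bookkeeping to be the main obstacle. Because the threshold $\sigma_0(\cdot)$ depends on the base time, the auxiliary length $T-\tau$, and hence the Gramian quotients $\beta_1/\alpha_0$ and $\alpha_1/\beta_0$ evaluated at it, is not a function of $|t-\tau|$ alone, so one must show these stay bounded as the base time grows. I would control this by using the monotonicity of the Gramian in $\sigma$ (the nonuniform analogue of Proposition~\ref{Prop1}(i), letting the lower and upper bounds persist for all larger windows) to fix an admissible $T$, and then define $\alpha$ as the supremum over all admissible base points of the resulting estimate. The closing step is to verify that this supremum is finite for each value of $|t-\tau|$ and bounded on bounded sets, so that the constructed $\alpha$ indeed lies in $\mathcal{B}$ and \eqref{crec-acot} holds on all of $J$.
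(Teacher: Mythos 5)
Your Step 1 is correct and is the standard Kalman mechanism: pairing the upper bound in \eqref{K} with the lower bound in \eqref{alpha0alpha1} through the identity \eqref{definicionK} bounds $\|\Phi_{A}(t+\sigma,t)\|$, and the reverse pairing bounds $\|\Phi_{A}(t,t+\sigma)\|=\|\Phi_{A}(t+\sigma,t)^{-1}\|$, both for $\sigma\geq\sigma_{0}(t)$. This is exactly the route the paper relies on: the lemma is the implication ``\eqref{alpha0alpha1} and \eqref{K} imply \eqref{crec-acot}'' contained in Theorem \ref{p2l3}, whose proof is deferred to Theorem 25 of \cite{HMR}. For pairs whose separation exceeds the relevant threshold $\sigma_{0}$, your derivation is complete and your choice of $\nu$ and $\alpha$ is the right one.

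The genuine gap is the short-window case, and the repair you sketch does not close it. When $0<|t-\tau|<\sigma_{0}(\min\{t,\tau\})$ you factor $\Phi_{A}(t,\tau)=\Phi_{A}(T,t)^{-1}\Phi_{A}(T,\tau)$ with $T-\tau\geq\sigma_{0}(\tau)$ and $T-t\geq\sigma_{0}(t)$, which yields a bound of the form
$e^{(\mu_{1}+\tilde{\mu}_{0})t+(\mu_{0}+\tilde{\mu}_{1})\tau}\sqrt{\alpha_{1}(T-t)/\beta_{0}(T-t)}\,\sqrt{\beta_{1}(T-\tau)/\alpha_{0}(T-\tau)}$.
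The exponential prefactor is harmless, but the Gramian quotients are evaluated at window lengths of order $\sigma_{0}(\tau)$, and Definition \ref{DEFNUCC} places no restriction on how fast $\sigma_{0}(\cdot)$ may grow with the base time; since these quotients typically grow exponentially in the window length (compare $\vartheta_{1}(\sigma)$ in Example 1), the supremum you propose to take over base points can be infinite for a fixed value of $|t-\tau|$, so the resulting $\alpha$ need not lie in $\mathcal{B}$, nor even be well defined. Monotonicity of the Gramian in $\sigma$ lets the two-sided bounds persist for \emph{larger} windows, but what you need is information on windows \emph{shorter} than $\sigma_{0}$, which the Gramian hypotheses simply do not provide. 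To finish one must either assume $\sup_{t}\sigma_{0}(t)<\infty$ (as happens in Example 1, where $\sigma_{0}$ is constant) or impose a compatibility condition between the growth of $\sigma_{0}(\cdot)$ and that of $\beta_{1}/\alpha_{0}$ and $\alpha_{1}/\beta_{0}$; neither is recorded in the definition, so this step must be supplied or the hypothesis made explicit. A second, smaller gap: positivity of $\alpha_{0}$ and $\beta_{0}$ on $[0,+\infty)$ does not by itself imply that $1/\alpha_{0}$ and $1/\beta_{0}$ map bounded sets into bounded sets, so even in the long-window case the membership $\alpha\in\mathcal{B}$ requires continuity or monotonicity of these functions, which you should state.
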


\begin{remark}
\label{Kalmanproperty}
    As introduced in \textnormal{\cite{HMR}}, the expression \eqref{crec-acot} is known as the nonuniform Kalman property. Additionally, we can note that the nonuniform bounded growth is a particular case of the \textbf{nonuniform Kalman property}.
\end{remark}

From the nonuniform complete controllability and the nonuniform Kalman condition, we finish this section by considering the following result about the relationship between these concepts (see \cite[Theorem 25]{HMR} for a detailed proof by considering $J=\mathbb{R}_{0}^{+}$).

\begin{theorem}
\label{p2l3}
Any two of the properties \eqref{alpha0alpha1}, \eqref{K} and \eqref{crec-acot} (on $J=\mathbb{R}_{0}^{+}$)
imply the third one.
\end{theorem}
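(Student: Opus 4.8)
The plan is to reproduce, in the nonuniform setting, Kalman's argument for Proposition \ref{Prop2}, whose whole weight rests on the single algebraic identity \eqref{definicionK}. Writing $\Phi:=\Phi_{A}(t+\sigma,t)$, that identity reads $K(t,t+\sigma)=\Phi\,W(t,t+\sigma)\,\Phi^{T}$ and, since $\Phi$ is invertible, $W(t,t+\sigma)=\Phi^{-1}K(t,t+\sigma)\Phi^{-T}$. Both Gramians being symmetric positive definite, the scalar identity $v^{T}Kv=|W^{1/2}\Phi^{T}v|^{2}$ (valid for every $v$) yields, upon bounding the middle factor by the extreme eigenvalues of $W$ and $|\Phi^{T}v|$ by the singular values of $\Phi$, the whole family of one-sided estimates I will need, for instance
\begin{equation*}
\|\Phi\|^{2}\le \frac{\lambda_{\max}(K)}{\lambda_{\min}(W)},\qquad \lambda_{\max}(K)\le \|\Phi\|^{2}\lambda_{\max}(W),\qquad \lambda_{\min}(K)\ge \frac{\lambda_{\min}(W)}{\|\Phi^{-1}\|^{2}},
\end{equation*}
together with the three mirror inequalities obtained by exchanging the roles of $W$ and $K$ (and of $\Phi$ and $\Phi^{-1}$) through $W=\Phi^{-1}K\Phi^{-T}$. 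The only other ingredient is that, on $J=\mathbb{R}_{0}^{+}$, the nonuniform Kalman property \eqref{crec-acot} controls both $\|\Phi\|=\|\Phi_{A}(t+\sigma,t)\|\le e^{\nu t}\alpha(\sigma)$ and $\|\Phi^{-1}\|=\|\Phi_{A}(t,t+\sigma)\|\le e^{\nu(t+\sigma)}\alpha(\sigma)$.

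Granting \eqref{crec-acot}, the two implications that produce a Gramian bound are pure substitution. From \eqref{alpha0alpha1} and the $\Phi$-estimates one gets $\lambda_{\max}(K)\le\|\Phi\|^{2}\lambda_{\max}(W)\le e^{2(\nu+\mu_{1})t}\alpha(\sigma)^{2}\alpha_{1}(\sigma)$ and $\lambda_{\min}(K)\ge\lambda_{\min}(W)/\|\Phi^{-1}\|^{2}\ge e^{-2(\nu+\mu_{0})t}e^{-2\nu\sigma}\alpha_{0}(\sigma)/\alpha(\sigma)^{2}$; setting $\tilde\mu_{1}=\nu+\mu_{1}$, $\tilde\mu_{0}=\nu+\mu_{0}$, $\beta_{1}(\sigma)=\alpha(\sigma)^{2}\alpha_{1}(\sigma)$ and $\beta_{0}(\sigma)=e^{-2\nu\sigma}\alpha_{0}(\sigma)/\alpha(\sigma)^{2}$ yields \eqref{K}. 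Feeding instead \eqref{K} and the $\Phi$-estimates into the mirror inequalities reproduces \eqref{alpha0alpha1} in exactly the same way, reading off $\alpha_{0},\alpha_{1},\mu_{0},\mu_{1}$. No difficulty beyond tracking the exponential rates arises in either case.

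The substantive implication is \eqref{alpha0alpha1}$+$\eqref{K}$\Rightarrow$\eqref{crec-acot}, where the transition matrix must instead be manufactured from the two Gramians. Pairing the lower bound of $W$ in \eqref{alpha0alpha1} with the upper bound of $K$ in \eqref{K} through $\|\Phi\|^{2}\le\lambda_{\max}(K)/\lambda_{\min}(W)$ gives the forward estimate $\|\Phi_{A}(t+\sigma,t)\|\le e^{(\mu_{0}+\tilde\mu_{1})t}\sqrt{\beta_{1}(\sigma)/\alpha_{0}(\sigma)}$, and the mirror pairing gives $\|\Phi_{A}(t,t+\sigma)\|\le e^{(\mu_{1}+\tilde\mu_{0})t}\sqrt{\alpha_{1}(\sigma)/\beta_{0}(\sigma)}$, both valid for $\sigma\ge\sigma_{0}(t)$. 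For endpoints separated by more than the threshold at the earlier time these already have the shape \eqref{crec-acot}, with $\nu=\max\{\mu_{0}+\tilde\mu_{1},\,\mu_{1}+\tilde\mu_{0}\}$ and $\alpha(\cdot)=\max\{\sqrt{\beta_{1}/\alpha_{0}},\,\sqrt{\alpha_{1}/\beta_{0}}\}$ (bounding the base-time exponential by $e^{\nu|\tau|}$ in the backward ordering). The hard part is precisely the complementary range $|t-\tau|<\sigma_{0}(\cdot)$, on which the Gramian inequalities are silent: there I would use the cocycle identity $\Phi_{A}(t,\tau)=\Phi_{A}(t,s)\Phi_{A}(s,\tau)$ to splice the short interval onto an admissible long one and close the short factor with Grönwall's inequality, available because $A(\cdot)$ is measurable and bounded on finite intervals, the weight $e^{\nu|\tau|}$ being exactly what absorbs the growth of this local bound with the base time. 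Ensuring that the patched estimate still delivers a single $\alpha(\cdot)\in\mathcal{B}$ is the delicate book-keeping and is the content carried out in detail in \cite[Theorem 25]{HMR}; modulo that step, the three implications close up.
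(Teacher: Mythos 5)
Your overall architecture is sound and coincides with what the paper actually does (the paper gives no proof of Theorem \ref{p2l3} and defers entirely to \cite[Theorem 25]{HMR}, which is the Kalman-style argument you reconstruct). The two ``substitution'' implications are complete and correct: the identity \eqref{definicionK} together with the six eigenvalue/singular-value inequalities you list does transfer \eqref{alpha0alpha1} to \eqref{K} and back in the presence of \eqref{crec-acot}, and your bookkeeping of the rates $\tilde\mu_i=\nu+\mu_i$ and of the functions $\beta_0,\beta_1$ is right.

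The gap is in the implication \eqref{alpha0alpha1}$+$\eqref{K}$\Rightarrow$\eqref{crec-acot}, precisely in the short-separation regime $|t-\tau|<\sigma_0(\cdot)$ that you flag. Your proposed patch --- close the short factor with Gr\"onwall, ``the weight $e^{\nu|\tau|}$ being exactly what absorbs the growth of this local bound with the base time'' --- does not work under the paper's standing hypotheses. The matrix $A(\cdot)$ is only assumed measurable and \emph{bounded on finite intervals}; this places no restriction whatsoever on how fast $m(\tau):=\sup_{[\tau,\tau+\sigma_0(\tau)]}\|A\|$ grows as $\tau\to+\infty$ (it may grow like $e^{e^{\tau}}$), so the Gr\"onwall bound $e^{\int_{\tau}^{t}\|A(r)\|\,dr}$ on the short factor need not be dominated by $e^{\nu\tau}\alpha(|t-\tau|)$ for any fixed $\nu$ and any $\alpha\in\mathcal B$. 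The correct mechanism does not use any regularity of $A$ at all: it uses the additivity of the Gramian over subintervals. For $s\le t\le b$ one has $W(s,b)=W(s,t)+\Phi_{A}(s,t)\,W(t,b)\,\Phi_{A}^{T}(s,t)\ \ge\ \Phi_{A}(s,t)\,W(t,b)\,\Phi_{A}^{T}(s,t)$, so choosing $b$ large enough that both $b-s\ge\sigma_0(s)$ and $b-t\ge\sigma_0(t)$ yields $\|\Phi_{A}(s,t)\|^{2}\le\lambda_{\max}\bigpars{W(s,b)}/\lambda_{\min}\bigpars{W(t,b)}$, which is controlled by \eqref{alpha0alpha1} alone; the analogous splitting of $K$ controls the forward short factor. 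This is the step actually carried out in \cite[Theorem 25]{HMR}, and it is also where the residual delicacy lives (one must choose $b$ uniformly enough in $(s,t)$ that the resulting bound depends on $(s,t)$ only through $e^{\nu|s|}$ and a function of $|t-s|$ in $\mathcal B$). So: replace the Gr\"onwall patch by the Gramian-splitting identity; as written, that step of your argument would fail.
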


The work in \cite{HMR} was raised in a context where only the relationship of nonuniform complete controllability and nonuniform complete stabilizability on $\mathbb{R}_{0}^{+}$ is studied. However, a careful reading of section 3.1 in \cite{HMR} allows us to observe that all the results can be generalized to the $J=\mathbb{R}$ case. First, nonuniform complete controllability in R is defined.

\begin{definition}
\label{DEFNUCCenR}
The linear control system \eqref{LTVControlabilidad} is said to be nonuniformly completely controllable on $J=\mathbb{R}$ if we have the same constant than Definition \ref{DEFNUCC} such that for any $t\in J$, there exists $\sigma_0(t)>0$ with:
\begin{equation}
\label{alpha0alpha1enR}
    0<e^{-2\mu_0 |t|}\alpha_0(\sigma)I\leq W(t,t+\sigma)\leq e^{2\mu_1 |t|}\alpha_1(\sigma)I,
\end{equation}
\begin{equation}
\label{KenR}
    0<e^{-2\tilde{\mu}_0 |t|}\beta_0(\sigma) I\leq K(t,t+\sigma)\leq e^{2\tilde{\mu}_1 |t|}\beta_1(\sigma) I, 
\end{equation}
for every $\sigma\geq \sigma_{0}(t)$. 
\end{definition}

Based on the previous definition, we can state the following theorem which is a generalization of Theorem \ref{p2l3}.

\begin{theorem}
\label{p2l3ControlR}
Any two of the properties \eqref{alpha0alpha1enR}, \eqref{KenR} and \eqref{crec-acot} (on $J=\mathbb{R}$)
imply the third one.
\end{theorem}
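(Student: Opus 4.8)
The plan is to reduce the statement to the already established half-line version, Theorem \ref{p2l3} (proved as \cite[Theorem 25]{HMR} for $J=\mathbb{R}_{0}^{+}$), by exploiting the fact that the only difference between \eqref{alpha0alpha1enR}--\eqref{KenR} and \eqref{alpha0alpha1}--\eqref{K} is the replacement of the exponent $t$ by $|t|$. Since $|t|=t$ for $t\geq 0$, every estimate carried out in \cite{HMR} over $\mathbb{R}_{0}^{+}$ remains valid verbatim on the positive half-line; the task is therefore to show that the same chain of implications survives on $\mathbb{R}_{0}^{-}$, where $|t|=-t$. I would organize the argument as the three separate implications, mirroring the structure of Kalman's Proposition \ref{Prop2}: (i) \eqref{alpha0alpha1enR} and \eqref{KenR} imply \eqref{crec-acot}; (ii) \eqref{alpha0alpha1enR} and \eqref{crec-acot} imply \eqref{KenR}; and (iii) \eqref{KenR} and \eqref{crec-acot} imply \eqref{alpha0alpha1enR}.

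Two of these are essentially algebraic and transfer without difficulty. Starting from the identity \eqref{definicionK}, namely $K(t,t+\sigma)=\Phi_{A}(t+\sigma,t)\,W(t,t+\sigma)\,\Phi_{A}^{T}(t+\sigma,t)$, a one-sided Gramian bound together with a norm bound on the corresponding transition matrix immediately produces the complementary Gramian bound by conjugation: inserting $W(t,t+\sigma)\leq e^{2\mu_{1}|t|}\alpha_{1}(\sigma)I$ and $\|\Phi_{A}(t+\sigma,t)\|\leq e^{\nu|t|}\alpha(\sigma)$ yields an upper estimate for $K(t,t+\sigma)$ of the required form $e^{2\tilde{\mu}_{1}|t|}\beta_{1}(\sigma)I$, and symmetrically for the lower bounds and for the reverse direction. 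Because these manipulations only ever use the sign-agnostic quantity $|t|$, they hold identically for $t\geq 0$ and $t\leq 0$, giving implications (ii) and (iii) on all of $\mathbb{R}$. The pointwise part of (i) is equally direct: combining the lower bound on $W$ with the upper bound on $K$, and vice versa, in \eqref{definicionK} bounds $\|\Phi_{A}(t+\sigma,t)\|$ and $\|\Phi_{A}(t,t+\sigma)\|$ by expressions of the form $e^{\nu_{0}|t|}\sqrt{\beta_{1}(\sigma)/\alpha_{0}(\sigma)}$ over every single interval of length $\sigma$.

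The one step that requires genuine care, and which I expect to be the main obstacle, is upgrading these length-$\sigma$ transition-matrix estimates to the global nonuniform Kalman bound \eqref{crec-acot} for arbitrary $t,\tau\in\mathbb{R}$. This is a cocycle argument: one partitions the interval between $\tau$ and $t$ into pieces of length $\sigma$, writes $\Phi_{A}(t,\tau)$ as the corresponding ordered product via the evolution property, and bounds each factor by the pointwise estimate above. On $\mathbb{R}$ the partition points may straddle the origin and accumulate in both directions, so the product of the individual nonuniform weights $e^{\nu_{0}|t_{k}|}$ must be controlled and absorbed into a single factor of the form $e^{\nu|\tau|}$ times a function of the elapsed time $|t-\tau|$ lying in $\mathcal{B}$; together with the boundedness of $A(\cdot)$ on finite intervals, which handles the residual sub-$\sigma$ piece, this reproduces exactly \eqref{crec-acot}. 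An equivalent and perhaps cleaner route for the negative half-line is the time-reversal change of variables $s\mapsto -s$, under which $\Phi_{A}(-t,-s)$ is the transition matrix of $\dot{z}=-A(-t)z$ and the roles of $W$ and $K$ are interchanged; this maps every condition at $t\leq 0$ for the original system to the corresponding condition at $-t\geq 0$ for the reversed system, so that Theorem \ref{p2l3} may be invoked directly and the two half-line conclusions then merged by taking the larger of the resulting constants and functions.
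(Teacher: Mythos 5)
The paper offers no written proof of Theorem \ref{p2l3ControlR}: it is stated as a direct generalization of Theorem \ref{p2l3} (i.e.\ of \cite[Theorem 25]{HMR}), with the only indication being that ``a careful reading of Section 3.1 in \cite{HMR}'' extends everything to $J=\mathbb{R}$ by writing $|t|$ in place of $t$. So your proposal is being measured against that intended adaptation. Your overall skeleton is right: the three implications, the purely algebraic transfer of bounds through the conjugation identity \eqref{definicionK} for the two ``easy'' implications, and the identification of the passage from the Gramian inequalities to \eqref{crec-acot} as the delicate step. The time-reversal observation ($W$ and $K$ exchanging roles, so that the negative half-line reduces to the positive one) is also correct and is a clean device, consistent with how the paper itself uses duality elsewhere (Theorem \ref{adjuntoydual}).

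The genuine gap is in the step you yourself flag as the main obstacle: the chaining/cocycle argument does not work in the nonuniform setting, and the obstruction is not merely that the partition points straddle the origin. If you split $[\tau,t]$ into $N\approx|t-\tau|/\sigma$ pieces and bound each factor by $c(\sigma)e^{\nu_0|t_k|}$ with $t_k=\tau+k\sigma$, the product carries the weight $e^{\nu_0\sum_k|t_k|}$, and already for $\tau\geq0$ one has $\sum_k|t_k|=N\tau+\sigma N(N-1)/2$. The resulting factor $e^{\nu_0 N|\tau|}$ cannot be absorbed into a single $e^{\nu|\tau|}$ times a function of $|t-\tau|$: that would require $\nu\geq N\nu_0$ for every $N$. (This is exactly why chaining works in Kalman's uniform Proposition \ref{Prop2} --- there the per-interval bound is $t$-independent, so the product depends only on $N$ --- but fails here.) The correct route, and the one the nonuniform definition is built for, is to avoid chaining altogether: since \eqref{alpha0alpha1enR}--\eqref{KenR} are assumed for \emph{every} $\sigma\geq\sigma_0(t)$, you may take $\sigma=|t-\tau|$ in a single application of \eqref{definicionK}, e.g.\ $\alpha_0(\sigma)e^{-2\mu_0|\tau|}\,|\Phi_A^{T}(t,\tau)x|^{2}\leq x^{T}K(\tau,t)x\leq\beta_1(\sigma)e^{2\tilde\mu_1|\tau|}|x|^{2}$, which gives $\|\Phi_A(t,\tau)\|\leq\sqrt{\beta_1(\sigma)/\alpha_0(\sigma)}\,e^{(\mu_0+\tilde\mu_1)|\tau|}$ in one step, with the symmetric estimate for the backward direction; only the short intervals $|t-\tau|<\sigma_0$ then need a composition of two (not $N$) factors, so only a bounded number of nonuniform weights ever accumulate. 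Your time-reversal alternative does not rescue this, since it only relocates the half-line on which the (still invalid) chaining would be performed.
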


\section{Observability framework}\label{sec-observability}

\subsection{Uniform complete observability and its relation with the uniform complete controllability} 
Taking into account the observability Gramian described in \eqref{gramianobservability}, we begin this section by recalling the definition of uniform complete observability (see \cite{Anderson69}, \cite[Definition 2.6]{Ikeda2} and \cite{Kalman}). 

\begin{definition}\label{UCO}    The linear time-varying control system \eqref{control1a}-\eqref{control1b} is said to be uniformly completely observable \textnormal{(UCO)} on $J=\mathbb{R}_{0}^{+}$ if there exists a fixed constant $\sigma>0$ and positive numbers $\bar{\alpha}_0(\sigma)$, $\bar{\beta}_0(\sigma)$, $\bar{\alpha}_1(\sigma)$ and $\bar{\beta}_1(\sigma)$ such that the following relations hold for all $t\geq 0$: 
\begin{equation}
\label{UCO1}
0<\bar{\alpha}_0(\sigma)I\leq M(t,t+\sigma)\leq \bar{\alpha}_1(\sigma)I,
\end{equation}
\begin{equation}
\label{UCO2}
0<\bar{\beta}_0(\sigma)I\leq N(t,t+\sigma)\leq \bar{\beta}_1(\sigma)I,
\end{equation}
where 
\begin{equation}
\label{UC02N}
   N(t,t+\sigma)=\Phi_{A}^{T}(t,t+\sigma)M(t,t+\sigma)\Phi_{A}(t,t+\sigma) 
\end{equation}
\end{definition}

It is usual to find in the literature (see \cite{BATISTA20173598}) that if $A(\cdot)$ and $C(\cdot)$ are initially considered to be bounded matrices, then in the previous definition we only requires the following condition: 
$$\bar{\alpha}_0(\sigma)I\leq M(t,t+\sigma),$$
since the second inequality of \eqref{UCO1} and the inequalities in \eqref{UCO2} are obtained as a consequence of the bounding of the matrices $A(\cdot)$ and $C(\cdot)$.

In order to pave the way for detailing the close relationship between observability and controllability, we consider the \textit{adjoint system} and the \textit{dual system} (see \cite{Callier,IlchmannNote} for more details) of 
\eqref{control1a}-\eqref{control1b} described respectively by \eqref{traspuesto1a} and \eqref{dual1a}:
\begin{equation}
\label{traspuesto1a}
\dot{x}(t)=-A^{T}(t)x(t)-C^{T}(t)u(t), 
\end{equation}
\begin{equation}
    \label{dual1a}
\dot{x}(t)=A^{T}(-t)x(t)+C^{T}(-t)u(t),
\end{equation}
with $t\in J\subseteq \mathbb{R}$, $x(t)\in\mathbb{R}^{n}$, $u(t)\in\mathbb{R}^{m}$. The dimensions of $A(t)$ and $C(t)$ are $n\times n$ and $m\times n$ respectively. In addition, the transition matrix of the plant of \eqref{traspuesto1a} and \eqref{dual1a} are denoted by $\Psi_{a}(\cdot,\cdot)$ and $\Psi_{d}(\cdot,\cdot)$ respectively, which satisfy the following relation with the transition matrix of the system \eqref{control1a}:
\begin{equation}
\label{Psiadjunto}
    \Psi_{a}(t,\tau)=\Phi_{A}^{T}(\tau,t),
\end{equation}
\begin{equation}
\label{Psidual}
    \Psi_{d}(t,\tau)=\Phi_{A}^{T}(-\tau,-t).
\end{equation}

It is important to emphasize that depending on the domain $J$ imposed for the control system \eqref{control1a}-\eqref{control1b}, the domain for the adjoint and dual system will be determined. For example, in the case where $J=\mathbb{R}_0^{+}$, we will have that the domains for the adjoint and dual systems will be $J_a=\mathbb{R}_{0}^{+}$ and $J_d=\mathbb{R}_{0}^{-}$ respectively. At some point in this paper we will consider the case where $J=J_a=J_d=\mathbb{R}$.


The following two theorems establish the relationship between the Gramian matrices of the system \eqref{control1a}-\eqref{control1b} and the respective Gramian matrices of the systems \eqref{traspuesto1a} and \eqref{dual1a}.

\begin{theorem} 
\label{adjuntoydual}
By considering $W_{a}$, $K_a$, $W_{d}$ and $K_d$ as the Gramian matrices of controllability of the systems \eqref{traspuesto1a} and \eqref{dual1a} respectively, then the following identities related to the Gramian matrices of observability $M$ and $N$ of the system \eqref{control1a}-\eqref{control1b} are satisfied:
\begin{equation} 
\label{identidadM}
M(t,t+\sigma)=W_{a}(t,t+\sigma)=K_{d}(-t-\sigma,-t).
\end{equation}
\begin{equation}
\label{identidadN}
 N(t,t+\sigma)=K_{a}(t,t+\sigma)=W_{d}(-t-\sigma,-t).
\end{equation}

\end{theorem}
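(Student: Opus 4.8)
The plan is to write each controllability Gramian of the adjoint and dual systems explicitly, substitute the transition-matrix relations \eqref{Psiadjunto}--\eqref{Psidual}, and then match the resulting integrands against the observability Gramians $M$ and $N$ from \eqref{gramianobservability} and \eqref{UC02N}. Throughout I would lean on the cocycle identity $\Phi_{A}(a,b)\Phi_{A}(b,c)=\Phi_{A}(a,c)$ and its transpose.

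First I would treat the adjoint system \eqref{traspuesto1a}, whose input matrix is $-C^{T}(\cdot)$. Writing its controllability Gramian and noting that $(-C^{T})(-C^{T})^{T}=C^{T}C$ cancels the sign, one gets
\[
W_{a}(t,t+\sigma)=\int_{t}^{t+\sigma}\Psi_{a}(t,s)\,C^{T}(s)C(s)\,\Psi_{a}^{T}(t,s)\;ds.
\]
Substituting $\Psi_{a}(t,s)=\Phi_{A}^{T}(s,t)$ from \eqref{Psiadjunto} (so that $\Psi_{a}^{T}(t,s)=\Phi_{A}(s,t)$) turns the integrand into exactly that of $M(t,t+\sigma)$, yielding the first identity $M(t,t+\sigma)=W_{a}(t,t+\sigma)$. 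Applying the definition \eqref{definicionK} of $K$ to the adjoint system and using $\Psi_{a}(t+\sigma,t)=\Phi_{A}^{T}(t,t+\sigma)$ then gives $K_{a}(t,t+\sigma)=\Phi_{A}^{T}(t,t+\sigma)\,M(t,t+\sigma)\,\Phi_{A}(t,t+\sigma)$, which is precisely $N(t,t+\sigma)$ by \eqref{UC02N}.

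Next I would handle the dual system \eqref{dual1a}, whose input matrix is $C^{T}(-\cdot)$; the new feature is time reversal. After writing $W_{d}(-t-\sigma,-t)$ and inserting $\Psi_{d}(\cdot,\cdot)=\Phi_{A}^{T}(-\cdot,-\cdot)$ from \eqref{Psidual}, I would perform the change of variable $r=-s$, which flips the limits from $[-t-\sigma,-t]$ back to $[t,t+\sigma]$ and converts the integrand into $\Phi_{A}^{T}(r,t+\sigma)\,C^{T}(r)C(r)\,\Phi_{A}(r,t+\sigma)$. To recognize this as $N(t,t+\sigma)$, I would expand $N$ via \eqref{UC02N} and \eqref{gramianobservability} and absorb the outer factors $\Phi_{A}^{T}(t,t+\sigma)$ and $\Phi_{A}(t,t+\sigma)$ into the integral using $\Phi_{A}(s,t)\Phi_{A}(t,t+\sigma)=\Phi_{A}(s,t+\sigma)$, obtaining $N(t,t+\sigma)=W_{d}(-t-\sigma,-t)$. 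The last identity $M(t,t+\sigma)=K_{d}(-t-\sigma,-t)$ follows the same pattern: applying \eqref{definicionK} to the dual system, using $\Psi_{d}(-t,-t-\sigma)=\Phi_{A}^{T}(t+\sigma,t)$, and collapsing $\Phi_{A}(s,t+\sigma)\Phi_{A}(t+\sigma,t)=\Phi_{A}(s,t)$ recovers the integrand of $M$.

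The algebra is routine; the step requiring the most care is the time-reversal change of variable in the dual case, where one must track the orientation of the integration interval together with the sign inside the argument of $\Phi_{A}$, and then correctly apply the cocycle property (and its transpose) to the boundary transition matrices so that the composed arguments telescope to the base point $t$ (for $M$) or $t+\sigma$ (for $N$).
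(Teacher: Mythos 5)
Your proposal is correct and follows essentially the same route as the paper: write the Gramians explicitly, substitute the relations \eqref{Psiadjunto}--\eqref{Psidual}, perform the time-reversal change of variable $r=-s$ for the dual system, and use the cocycle property to telescope the boundary transition matrices. The only cosmetic difference is the direction of the computation (you start from $W_a$, $K_a$, $W_d$, $K_d$ and land on $M$, $N$, whereas the paper starts from $M$, $N$), which changes nothing of substance.
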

\begin{proof}
    The Gramian matrix of observability of the system (\ref{control1a})-(\ref{control1b}) is given by \eqref{gramianobservability}, with $t_0=t$ and $t_f=t+\sigma$. In addition, by considering the adjoint system and \eqref{Psiadjunto}, then we have that:
    \begin{equation}
    \label{RelacionObsCont}
    \begin{array}{rcl}
    &&M(t,t+\sigma)=\displaystyle\int_{t}^{t+\sigma} \Phi_{A}^{T}(s,t)C^{T}(s)C(s)\Phi_{A}(s,t)\; ds,\\
    &=&\displaystyle\int_{t}^{t+\sigma} \Psi_{a}(t,s)(-C^{T}(s))(-C(s))\Psi_{a}^{T}(t,s)\; ds,\\
    &=&W_{a}(t,t+\sigma),
    \end{array}
    \end{equation}
and
    \begin{equation}
    \label{RelacionObsCont2}
    \begin{array}{rcl}
&&N(t,t+\sigma)=\Phi_{A}^{T}(t,t+\sigma)M(t,t+\sigma)\Phi_{A}(t,t+\sigma),\\
&=&\displaystyle\int_{t}^{t+\sigma} \Phi_{A}^{T}(s,t+\sigma)C^{T}(s)C(s)\Phi_{A}(s,t+\sigma)\; ds,\\
    &=&\displaystyle\int_{t}^{t+\sigma} \Psi_{a}(t+\sigma,s)(-C^{T}(s))(-C(s))\Psi_{a}^{T}(t+\sigma,s)\; ds,\\\\
    &=&\Psi_{a}(t+\sigma,t)W_{a}(t,t+\sigma)\Psi_{a}^{T}(t+\sigma,t)=K_{a}(t,t+\sigma).
    \end{array}
    \end{equation}
On the other hand, by considering the dual system and \eqref{Psidual}, we obtain that: 

\begin{equation}
    \label{RelacionObsContDual1}
    \begin{array}{rcl}
    &&M(t,t+\sigma)=\displaystyle\int_{t}^{t+\sigma} \Phi_{A}^{T}(s,t)C^{T}(s)C(s)\Phi_{A}(s,t)\; ds,\\
    &=&\displaystyle\int_{t}^{t+\sigma} \Psi_{d}(-t,-s)C^{T}(s)C(s)\Psi_{d}^{T}(-t,-s)\; ds,\\
    &=&\displaystyle-\int_{-t}^{-t-\sigma} \Psi_{d}(-t,r)C^{T}(-r)C(-r)\Psi_{d}^{T}(-t,r)\; dr,\\
    &=&\displaystyle\int_{-t-\sigma}^{-t} \Psi_{d}(-t,r)C^{T}(-r)C(-r)\Psi_{d}^{T}(-t,r)\; dr,\\
    &=&\Psi_{d}(-t,-t-\sigma) W_{d}(-t-\sigma,-t)\Psi_{d}^{T}(-t,-t-\sigma),\\
    &=&K_{d}(-t-\sigma,-t)
    \end{array}
    \end{equation}
    and 
    \begin{equation}
    \label{RelacionObsDual2}
    \begin{array}{rcl}
&&N(t,t+\sigma)=\Phi_{A}^{T}(t,t+\sigma)M(t,t+\sigma)\Phi_{A}(t,t+\sigma),\\
&=&\displaystyle\int_{t}^{t+\sigma} \Phi_{A}^{T}(s,t+\sigma)C^{T}(s)C(s)\Phi_{A}(s,t+\sigma)\; ds,\\
    &=&\displaystyle\int_{t}^{t+\sigma} \Psi_{d}(-t-\sigma,-s)C^{T}(s)C(s)\Psi_{d}^{T}(-t-\sigma,-s)\; ds,\\
    &=&\displaystyle\int_{-t-\sigma}^{-t} \Psi_{d}(-t-\sigma,r)C^{T}(-r)C(-r)\Psi_{d}^{T}(-t-\sigma,r)\; dr,\\
    &=&W_{d}(-t-\sigma,-t).
    \end{array}
    \end{equation}
\end{proof}


Based on the Theorem \ref{adjuntoydual}, we have the ability to identify controllability and observability results depending on the system under consideration. With this same approach, it is straightforward to obtain a result such as the one stated in Proposition \ref{Prop2}, where the uniform Kalman property is also related to uniform complete observability (see \cite{Anderson69,Kalman} for details). 

\begin{theorem}
\label{p2l3obs}
    In the uniform context, any two of the properties \eqref{UCO1}, \eqref{UCO2} and \eqref{BG} imply the third one.
\end{theorem}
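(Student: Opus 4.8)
The plan is to reduce the statement to Proposition \ref{Prop2} applied to the adjoint system \eqref{traspuesto1a}. The key observation is that Theorem \ref{adjuntoydual} already translates the two observability Gramians of \eqref{control1a}-\eqref{control1b} into the two controllability Gramians of the adjoint: by \eqref{identidadM} and \eqref{identidadN} we have $M(t,t+\sigma)=W_{a}(t,t+\sigma)$ and $N(t,t+\sigma)=K_{a}(t,t+\sigma)$. Hence the two-sided bounds \eqref{UCO1} on $M$ are, verbatim, the bounds \eqref{UC1} on the controllability Gramian $W_{a}$ of the adjoint system, and likewise \eqref{UCO2} on $N$ coincides with \eqref{UC2} on $K_{a}$.

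First I would establish the remaining entry of the dictionary, namely that the bounded growth property \eqref{BG} is \emph{self-dual}: it holds for \eqref{control1a} if and only if it holds for the adjoint plant \eqref{traspuesto1a}. Indeed, by \eqref{Psiadjunto} we have $\Psi_{a}(t,\tau)=\Phi_{A}^{T}(\tau,t)$, and since the operator norm is invariant under transposition, $\|\Psi_{a}(t,\tau)\|=\|\Phi_{A}(\tau,t)\|$. Using the symmetry $|t-\tau|=|\tau-t|$ and the fact that in the uniform context $J_{a}=J=\mathbb{R}_{0}^{+}$, the bound $\|\Psi_{a}(t,\tau)\|\leq \alpha(|t-\tau|)$ for the adjoint is equivalent to $\|\Phi_{A}(\tau,t)\|\leq \alpha(|\tau-t|)$ for the original system, with the \emph{same} function $\alpha(\cdot)\in\mathcal{B}$. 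Thus no constant is lost in passing through the transpose, and \eqref{BG} transfers in both directions.

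With this dictionary in hand the conclusion is immediate. Any two of \eqref{UCO1}, \eqref{UCO2}, \eqref{BG} for \eqref{control1a}-\eqref{control1b} translate into the corresponding two of \eqref{UC1}, \eqref{UC2}, \eqref{BG} for the adjoint system \eqref{traspuesto1a}; Proposition \ref{Prop2} then yields the third adjoint condition, which translates back—again via Theorem \ref{adjuntoydual} and the self-duality of \eqref{BG}—into the missing observability condition. The only point that requires any care, and therefore the main (though mild) obstacle, is precisely the verification that the bounded growth property genuinely survives the transposition and the reindexing implicit in \eqref{Psiadjunto}; once transposition invariance of the norm and the symmetry of $|t-\tau|$ are recorded, the reduction to Proposition \ref{Prop2} is exact and the result follows.
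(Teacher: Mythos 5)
Your proposal is correct and follows exactly the route the paper intends: the paper gives no explicit proof of Theorem \ref{p2l3obs}, merely remarking that it follows from Theorem \ref{adjuntoydual} (the identities $M=W_{a}$, $N=K_{a}$) together with Proposition \ref{Prop2} applied to the adjoint system. Your additional verification that \eqref{BG} is self-dual under the transposition \eqref{Psiadjunto} is the one detail the paper leaves implicit, and you handle it correctly.
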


\subsection{Nonuniform complete observability}
As with the nonuniform complete controllability property, if there is an intermediate property between complete observability and uniform complete observability, it must be closely related to the nonuniform Kalman property. The definition of this new property is given below. 

\begin{definition}
The system \eqref{control1a}-\eqref{control1b} is nonuniformly completelly observable \textnormal{(NUCO)} on $J\subseteq \mathbb{R}$ if there exist fixed numbers $\nu_0\geq0$, $\nu_1\geq0$, $\bar{\nu}_0\geq0$, $\bar{\nu}_1\geq0$ and functions $\vartheta_0$, $\varrho_0$, $\vartheta_1$ and $\varrho_1:[0,+\infty)\to (0,+\infty)$ such that for any $t\in J$, there exists $\sigma_0(t)>0$ with:
\begin{equation}
\label{Mobserv}
e^{-2\nu_0 |t|}\vartheta_0(\sigma)I\leq M(t,t+\sigma)\leq  e^{2\nu_1 |t|}\vartheta_1(\sigma)I
\end{equation}
\begin{equation}
\label{Mobserv2}
e^{-2\bar{\nu}_0 |t|}\varrho_0(\sigma)I\leq N(t,t+\sigma)\leq  e^{2\bar{\nu}_1 |t|}\varrho_1(\sigma)I
\end{equation}
for every $\sigma\geq\sigma_0(t)$. 
\end{definition}

\begin{remark}
     \label{Robserv}
The similarities between observability and controllability in this nonuniform framework are evident (see Definition \textnormal{\ref{DEFNUCCenR}}), which allows us to note that this notion is more general than uniform complete observability and more restrictive than complete observability.

\end{remark}

In Theorem \ref{adjuntoydual}, the relationship between the Gramian matrices of the controllability and observability were shown, with emphasis on whether the analysis is performed on the adjoint system or on the dual system. Thus, we establish the following theorems.  

\begin{theorem} 
\label{adjuntoNU}
    The system \eqref{control1a}-\eqref{control1b} is \textnormal{NUCO} if and only if the adjoint system \eqref{traspuesto1a} is \textnormal{NUCC}. 
\end{theorem}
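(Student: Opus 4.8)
The plan is to reduce the statement entirely to the Gramian identities already proved in Theorem~\ref{adjuntoydual}. Indeed, the left-hand equalities in \eqref{identidadM} and \eqref{identidadN} read $M(t,t+\sigma)=W_{a}(t,t+\sigma)$ and $N(t,t+\sigma)=K_{a}(t,t+\sigma)$, where $W_{a}$ and $K_{a}$ are exactly the controllability Gramians of the adjoint system \eqref{traspuesto1a}. Since the adjoint system is posed on the same interval $J$ as the original system \eqref{control1a}-\eqref{control1b}, proving the equivalence amounts to matching the defining inequalities of the two notions term by term.

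First I would write out the NUCO condition for \eqref{control1a}-\eqref{control1b}, namely the inequalities \eqref{Mobserv} and \eqref{Mobserv2} governing $M(t,t+\sigma)$ and $N(t,t+\sigma)$, with constants $\nu_0,\nu_1,\bar{\nu}_0,\bar{\nu}_1$ and functions $\vartheta_0,\vartheta_1,\varrho_0,\varrho_1$. Next I would write out the NUCC condition for the adjoint system in the $|t|$-form of Definition~\ref{DEFNUCCenR} restricted to $J$, i.e. the inequalities \eqref{alpha0alpha1enR} and \eqref{KenR} governing $W_{a}(t,t+\sigma)$ and $K_{a}(t,t+\sigma)$, with constants $\mu_0,\mu_1,\tilde{\mu}_0,\tilde{\mu}_1$ and functions $\alpha_0,\alpha_1,\beta_0,\beta_1$.

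Substituting $M=W_{a}$ and $N=K_{a}$ then makes the two systems of four inequalities literally coincide. Thus a choice of NUCO data for \eqref{control1a}-\eqref{control1b} is, verbatim, a choice of NUCC data for \eqref{traspuesto1a} under the identification $(\nu_0,\nu_1,\bar{\nu}_0,\bar{\nu}_1)\leftrightarrow(\mu_0,\mu_1,\tilde{\mu}_0,\tilde{\mu}_1)$ and $(\vartheta_0,\vartheta_1,\varrho_0,\varrho_1)\leftrightarrow(\alpha_0,\alpha_1,\beta_0,\beta_1)$, sharing even the same threshold function $\sigma_0(t)$. Reading this equivalence from left to right gives one implication and from right to left gives the other, which settles both directions of the ``if and only if''.

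I do not expect a genuine obstacle in this argument: the content is carried entirely by Theorem~\ref{adjuntoydual}, and what remains is bookkeeping. The only points requiring care are confirming that the adjoint system indeed inherits the domain $J$ (so that the inequalities are quantified over the same set of $t$), and checking that the strict positivity built into \eqref{alpha0alpha1enR}-\eqref{KenR} is compatible with the NUCO inequalities \eqref{Mobserv}-\eqref{Mobserv2}; both are immediate since the two families of Gramians are equal as matrices for every $t$ and every admissible $\sigma$.
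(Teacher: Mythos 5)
Your argument is exactly the paper's proof: it invokes the identities $M(t,t+\sigma)=W_{a}(t,t+\sigma)$ and $N(t,t+\sigma)=K_{a}(t,t+\sigma)$ from Theorem~\ref{adjuntoydual} and then identifies the NUCO inequalities \eqref{Mobserv}--\eqref{Mobserv2} with the NUCC inequalities \eqref{alpha0alpha1enR}--\eqref{KenR} term by term, noting (as Remark~\ref{relacionintervalos} does) that the adjoint system lives on the same interval $J$. The proposal is correct and coincides with the paper's approach.
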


\begin{proof}
Based on the identities \eqref{identidadM}-\eqref{identidadN} associated to the adjoint system, we have that $M(t,t+\sigma)=W_{a}(t,t+\sigma)$ and $N(t,t+\sigma)=K_{a}(t,t+\sigma)$, which allow us to ensure that the inequalities \eqref{alpha0alpha1enR}-\eqref{KenR} are equivalent to \eqref{Mobserv}-\eqref{Mobserv2} and the theorem follows.      
\end{proof}

\begin{theorem} 
\label{dualNU}
    The system \eqref{control1a}-\eqref{control1b} is \textnormal{NUCO} if and only if the dual system \eqref{dual1a} is \textnormal{NUCC}. 
\end{theorem}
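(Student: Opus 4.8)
The plan is to imitate the one-line argument behind Theorem~\ref{adjuntoNU}, but now reading off the \emph{dual} identities in Theorem~\ref{adjuntoydual}, namely $M(t,t+\sigma)=K_{d}(-t-\sigma,-t)$ from \eqref{identidadM} and $N(t,t+\sigma)=W_{d}(-t-\sigma,-t)$ from \eqref{identidadN}. The essential new feature, absent in the adjoint case, is the time reflection $t\mapsto -t-\sigma$: whereas for the adjoint system the Gramians coincided at the same instant, here they coincide only after reversing time and shifting the left endpoint. I would first fix the domain correspondence (taking $J=\mathbb{R}$, so that the dual system also lives on $J_{d}=\mathbb{R}$ by \eqref{Psidual}), and then prove both implications by transporting the defining inequalities of Definition~\ref{DEFNUCCenR} through these identities.

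For the direction ``dual NUCC $\Rightarrow$ NUCO'', I would evaluate the dual bounds \eqref{alpha0alpha1enR}--\eqref{KenR} at the base point $r=-t-\sigma$, i.e. on the interval $(r,r+\sigma)=(-t-\sigma,-t)$. Using $K_{d}(r,r+\sigma)=M(t,t+\sigma)$ and $W_{d}(r,r+\sigma)=N(t,t+\sigma)$, the inequalities \eqref{KenR} and \eqref{alpha0alpha1enR} turn into bounds on $M$ and $N$ carrying the weights $e^{\pm 2\tilde\mu|t+\sigma|}$ and $e^{\pm 2\mu|t+\sigma|}$ at the reflected endpoint. The next step is to trade $|t+\sigma|$ for $|t|$ via the triangle inequality $|t+\sigma|\le |t|+\sigma$, which gives $e^{-2\tilde\mu_0|t+\sigma|}\ge e^{-2\tilde\mu_0\sigma}e^{-2\tilde\mu_0|t|}$ for the lower coefficient and $e^{2\tilde\mu_1|t+\sigma|}\le e^{2\tilde\mu_1\sigma}e^{2\tilde\mu_1|t|}$ for the upper one, and similarly for the $\mu$'s.

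Absorbing the resulting $\sigma$-only factors into the coefficient functions then closes the argument: setting $\nu_0=\tilde\mu_0$, $\nu_1=\tilde\mu_1$, $\bar\nu_0=\mu_0$, $\bar\nu_1=\mu_1$ together with $\vartheta_0(\sigma)=e^{-2\tilde\mu_0\sigma}\beta_0(\sigma)$, $\vartheta_1(\sigma)=e^{2\tilde\mu_1\sigma}\beta_1(\sigma)$, $\varrho_0(\sigma)=e^{-2\mu_0\sigma}\alpha_0(\sigma)$ and $\varrho_1(\sigma)=e^{2\mu_1\sigma}\alpha_1(\sigma)$ — all still positive functions $[0,+\infty)\to(0,+\infty)$ — reproduces exactly \eqref{Mobserv}--\eqref{Mobserv2}. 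The converse ``NUCO $\Rightarrow$ dual NUCC'' is entirely symmetric: one solves $-t-\sigma=s$, rewrites the identities as $K_{d}(s,s+\sigma)=M(-s-\sigma,-s)$ and $W_{d}(s,s+\sigma)=N(-s-\sigma,-s)$, and transports \eqref{Mobserv}--\eqref{Mobserv2} back to \eqref{alpha0alpha1enR}--\eqref{KenR} by the same exponential bookkeeping.

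The step I expect to demand the most care is the behaviour of the threshold $\sigma_0(\cdot)$ under the reflection. In Definition~\ref{DEFNUCCenR} the bounds hold only for $\sigma\ge\sigma_0(t)$, and through the identity the dual bounds are invoked at the \emph{moving} base point $r=-t-\sigma$, whose modulus $|t+\sigma|$ itself grows with $\sigma$; the admissible-$\sigma$ requirement therefore becomes the implicit constraint $\sigma\ge\sigma_{0}^{d}(-t-\sigma)$. I would dispose of this by choosing the observability threshold $\sigma_0(t)$ large enough that this self-referential inequality holds for all larger $\sigma$, which is where the argument genuinely departs from the adjoint case of Theorem~\ref{adjuntoNU}, in which the base points coincide and the thresholds match directly. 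Apart from this bookkeeping, the proof is a direct translation through \eqref{identidadM}--\eqref{identidadN}.
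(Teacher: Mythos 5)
Your proposal is correct and follows essentially the same route as the paper: the paper's own proof of Theorem~\ref{dualNU} likewise just invokes the identities $M(t,t+\sigma)=K_{d}(-t-\sigma,-t)$ and $N(t,t+\sigma)=W_{d}(-t-\sigma,-t)$ from Theorem~\ref{adjuntoydual} and declares \eqref{alpha0alpha1enR}--\eqref{KenR} equivalent to \eqref{Mobserv}--\eqref{Mobserv2} ``except for exchanges in functions and constants.'' You are in fact more careful than the paper, which silently passes over both the $|t+\sigma|$-versus-$|t|$ discrepancy at the reflected base point (your absorption of the $e^{\pm 2\mu\sigma}$ factors into the $\sigma$-dependent coefficient functions) and the self-referential threshold condition $\sigma\ge\sigma_{0}^{d}(-t-\sigma)$ that you flag at the end.
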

\begin{proof}
    Based on the identities \eqref{identidadM}-\eqref{identidadN} associated to the dual system, we have that $M(t,t+\sigma)=K_{d}(-t-\sigma,-t)$ and $N(t,t+\sigma)=W_{d}(-t-\sigma,-t)$. Except for exchanges in functions and constants ($e^{-2\mu_0|t|}\alpha_0(\sigma)$ by $e^{-2\nu_0|t|}\varrho_0(\sigma)$ and $e^{2\mu_1|t|}\alpha_1(\sigma)$ by $e^{2\nu_1|t|}\varrho_1(\sigma)$), the inequalities \eqref{alpha0alpha1enR}-\eqref{KenR} are equivalent to \eqref{Mobserv}-\eqref{Mobserv2} and the theorem follows. 
\end{proof}
\begin{figure*}[h!]
\centering
\begin{tikzpicture}
[squarednode/.style={rectangle, draw=black!60, fill=red!5, very thick, minimum size=25mm},]

  \draw[line width=5pt]

    (0,0)  node[squarednode] (a) {$\begin{array}{l}
     \textnormal{NUCO}   \\
     \\
\left\{\begin{array}{rcl}
\dot{x}(t)&=&A(t)x(t)\\
y(t)&=&C(t)x(t)
\end{array}\right.
\end{array}$}

    (8,0)  node[squarednode]                 (b) {$\begin{array}{l}
     \textnormal{NUCC}  \\
     \\
\dot{x}(t)=-A^{T}(t)x(t)-C^{T}(t)u(t)
\end{array}$}

    (0,-4) node[squarednode]         (c) {$\begin{array}{l}
     \textnormal{NUCO}  \\
     \\
\left\{\begin{array}{rcl}
\dot{x}(t)&=&-A(-t)x(t)\\
y(t)&=&-C(-t)x(t)
\end{array}\right.
\end{array}$}

    (8,-4)  node[squarednode]                 (d) {$\begin{array}{l}
     \textnormal{NUCC}  \\
     \\
\dot{x}(t)=A^{T}(-t)x(t)+C^{T}(-t)u(t)
\end{array}$};


%


  \draw[<->,red,ultra thick] (a) -- (b)
  node[midway,above] {Th.\ref{adjuntoNU}};

  \draw[<->,red,ultra thick] (a) -- (d)
    node[midway,above] {$\quad$ Th.\ref{dualNU}};
 
  \draw[<->,red,ultra thick] (c) -- (d)
    node[midway,above] {Th.\ref{adjuntoNU}};

  \draw[<->] (a) -- (c);
  \draw[<->] (b) -- (d);

\end{tikzpicture}

\caption{NUCO and NUCC relations between a given system (above-left), its adjoint (above-right) and its dual (below-right). It is also shown the adjoint of the dual (below-left). Red thick arrows represent the main results of Section 3. Black thin arrows shown corollaries of these results, not stated in the text, since they are not used in this work.}
\label{fig-NUCO-NUCC}
\end{figure*}
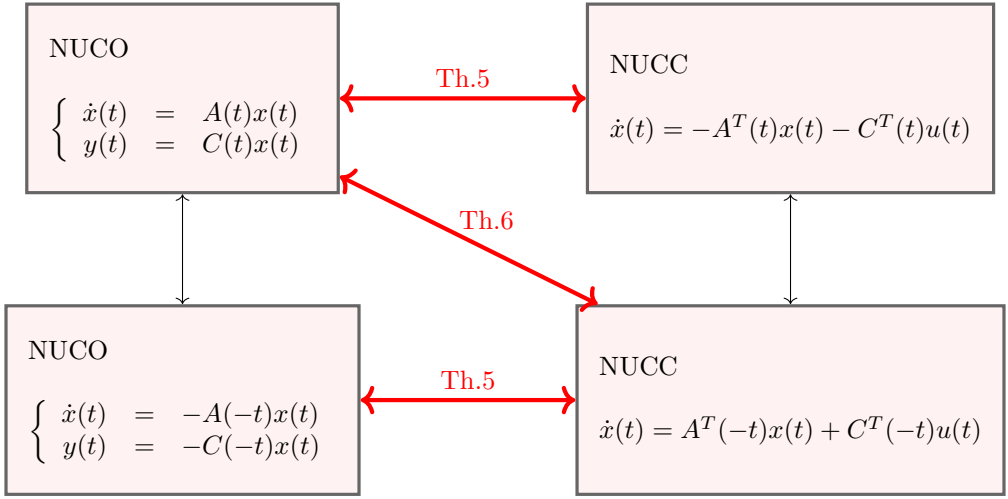

The Figure \ref{fig-NUCO-NUCC} graphically shows the results of Theorems \ref{adjuntoNU} and \ref{dualNU}, and illustrates how they will be used in this work.  

\begin{remark}
\label{relacionintervalos}
    Looking carefully at the above theorems, we can notice that in Theorem \ref{adjuntoNU} if the system \eqref{control1a}-\eqref{control1b} is \textnormal{NUCO} on $J=\mathbb{R}_{0}^{+}$, that will be equivalent to the adjoint system \eqref{traspuesto1a} being \textnormal{NUCC} on the same set $J=\mathbb{R}_{0}^{+}$. On the other hand, for Theorem \ref{dualNU}, if the system \eqref{control1a}-\eqref{control1b} is \textnormal{NUCO} in $J=\mathbb{R}_{0}^{+}$, that will be equivalent to the dual system \eqref{dual1a} being \textnormal{NUCC} in $\tilde{J}=\mathbb{R}_{0}^{-}$.
\end{remark}

\begin{remark}
\label{RelacionContObsUniforme}
    As a direct consequence of Theorems \ref{adjuntoNU} and \ref{dualNU}, we can first establish that the system \eqref{control1a}-\eqref{control1b} is \textnormal{UCO} if and only if the adjoint system \eqref{traspuesto1a} is \textnormal{UCC} and, on the other hand, the system \eqref{control1a}-\eqref{control1b} is \textnormal{UCO} if and only if the dual system \eqref{dual1a} is \textnormal{UCC}.
\end{remark}

The close relationship between controllability and observability in a nonuniform sense, established from the previous two theorems, in addition to including the nonuniform Kalman condition, allows us to establish the following theorem related to observability, which is very similar to the Theorem \ref{p2l3ControlR}.

\begin{theorem}
\label{p2l3obsNOUNIFORME}
    Any two of the properties \eqref{crec-acot}, \eqref{Mobserv} and \eqref{Mobserv2} (on $J=\mathbb{R}$) imply the third one.
\end{theorem}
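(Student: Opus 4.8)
The plan is to reduce this observability statement to the already-established controllability statement of Theorem \ref{p2l3ControlR}, applied to the adjoint system \eqref{traspuesto1a}. The bridge is Theorem \ref{adjuntoydual}: the identities \eqref{identidadM}--\eqref{identidadN} give $M(t,t+\sigma)=W_a(t,t+\sigma)$ and $N(t,t+\sigma)=K_a(t,t+\sigma)$, where $W_a$ and $K_a$ are the controllability Gramians of the adjoint system. Consequently, the observability estimate \eqref{Mobserv} for the original system is literally the estimate \eqref{alpha0alpha1enR} for the adjoint system, and likewise \eqref{Mobserv2} is \eqref{KenR} for the adjoint system, up to renaming the constants and the functions $\vartheta_i,\varrho_i$.

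First I would record that the two Gramian inequalities transfer verbatim, so that the only genuine work lies in the third property, the nonuniform Kalman condition \eqref{crec-acot}. The point to check is that \eqref{crec-acot} holds for the plant $\Phi_A$ of the original system if and only if the analogous condition holds for the plant $\Psi_a$ of the adjoint system. Using \eqref{Psiadjunto}, namely $\Psi_a(t,\tau)=\Phi_A^{T}(\tau,t)$, and the invariance of the Euclidean operator norm under transposition, one has $\|\Psi_a(t,\tau)\|=\|\Phi_A(\tau,t)\|$. Applying \eqref{crec-acot} with the two time arguments exchanged yields $\|\Phi_A(\tau,t)\|\le e^{\nu|t|}\alpha(|t-\tau|)$, and then the elementary bound $|t|\le|\tau|+|t-\tau|$ gives $e^{\nu|t|}\le e^{\nu|\tau|}e^{\nu|t-\tau|}$, so that $\|\Psi_a(t,\tau)\|\le e^{\nu|\tau|}\tilde\alpha(|t-\tau|)$ with $\tilde\alpha(s)=e^{\nu s}\alpha(s)$. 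Since $\alpha\in\mathcal{B}$ and $s\mapsto e^{\nu s}$ is bounded on bounded sets, $\tilde\alpha\in\mathcal{B}$, so the adjoint plant satisfies \eqref{crec-acot}; the converse is identical, because the adjoint of the adjoint recovers the original plant.

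With these three equivalences in hand, the proof closes immediately. Given any two of \eqref{crec-acot}, \eqref{Mobserv}, \eqref{Mobserv2} for \eqref{control1a}--\eqref{control1b}, I would translate them into the corresponding two of \eqref{crec-acot}, \eqref{alpha0alpha1enR}, \eqref{KenR} for the adjoint system \eqref{traspuesto1a}; Theorem \ref{p2l3ControlR} (on $J=\mathbb{R}$) then furnishes the remaining one for the adjoint, and translating back via the same equivalences delivers the missing property for the original system. This single translation simultaneously covers all three implications contained in the statement.

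I expect the main obstacle to be precisely the transfer of the nonuniform Kalman property between a system and its adjoint, since here the weight $e^{\nu|\tau|}$ depends on the second time argument and the roles of the two arguments are swapped by \eqref{Psiadjunto}; the triangle-inequality absorption of $e^{\nu|t-\tau|}$ into the $\mathcal{B}$-factor is what makes the two conditions genuinely equivalent rather than merely one-directional.
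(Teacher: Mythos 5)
Your proposal is correct and follows essentially the same route the paper intends: reduce to Theorem \ref{p2l3ControlR} for the adjoint system via the Gramian identities of Theorem \ref{adjuntoydual} and the equivalences of Theorem \ref{adjuntoNU}. The paper leaves this reduction implicit, and your explicit verification that the nonuniform Kalman condition \eqref{crec-acot} transfers between $\Phi_A$ and $\Psi_a$ (absorbing $e^{\nu|t-\tau|}$ into the $\mathcal{B}$-factor after swapping arguments) correctly supplies the one step that is not purely notational.
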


\section{Examples}\label{sec-examples}
In this section different examples will be explored, where more details about the new property of nonuniform complete observability and its relation with complete observability and uniform complete observability will be given. 


\subsection{Example 1} Based on previously proved relationship between controllability and observability, this example is strongly focused on the nonuniform bounded growth on $J=\mathbb{R}$ property and follows the same line as the example presented in \cite{HMR} (see Subsection 3.2).
If we assume that the linear system (\ref{control1a}) admits the property of nonuniform bounded growth with parameters $K_0$, $a$ and $\eta$, it is interesting to explore some conditions on $C(t)$ leading to the estimates (\ref{Mobserv}) and \eqref{Mobserv2}.
In order to get them, we will assume that there exist $\gamma_0$, $\gamma_1>0$ and $c_0$, $c_1>0$ such that for any $t\in\mathbb{R}$: 
    \begin{equation}
    \label{Cexample}
    c_0e^{-2\gamma_0 |t|}I\leq C^{T}(t)C(t)\leq c_1e^{2\gamma_1 |t|}I.
    \end{equation}
    Based on the transition matrix property, we have that for any vector $x\neq0$, it is verified that  
$
x=\Phi_{A}^{T}(s,t)\Phi_{A}^{T}(t,s)x
$
which implies the following:    
    $$|x|^{2}\leq\left\|\Phi_{A}^{T}(t,s)\right\|^{2}|\Phi_{A}^{T}(s,t)x|^{2}\Rightarrow \frac{|x|^{2}}{\left\|\Phi_{A}(t,s)\right\|^{2}}\leq|\Phi_{A}^{T}(s,t)x|^{2}, $$
    and by considering the nonuniform bounded growth hypothesis, we can ensure that
    \begin{equation*}
    \left\|\Phi_{A}(t,s)\right\|\leq K_0e^{a|s-t|+\eta |s|}\Leftrightarrow\frac{1}{K_0e^{a|s-t|+\eta |s|}}\leq \frac{1}{\left\|\Phi_{A}(t,s)\right\|}.
    \end{equation*}

    From the above, we will obtain the right-hand estimate of (\ref{alpha0alpha1}). For any $t\in\mathbb{R}$, there exist $\sigma_0(t)>0$ such that for any $\sigma\geq\sigma_0(t)$ and $x\neq0$:

\begin{displaymath}
    \begin{array}{rcl}
    &&\displaystyle\int_{t}^{t+\sigma}x^{T}\Phi_{A}^{T}(s,t)C^{T}(s)C(s)\Phi_{A}(s,t)x\;ds\\
    &\leq& \displaystyle\int_{t}^{t+\sigma}K_0^{2}e^{2a|s-t|+2\eta |t|}c_1^{2}e^{2\gamma_1 |s|}|x|^{2}\;ds,\\
    &\leq& \displaystyle\int_{t}^{t+\sigma}K_0^{2}e^{2(a+\gamma_1)|s-t|}e^{2\eta |t|}c_1^{2}e^{2\gamma_1 |t|}|x|^{2}\;ds,\\
    &=&\displaystyle K_0^{2}c_1^{2}|x|^{2}e^{-2(a+\gamma_1)t}e^{2(\eta+\gamma_1)|t|}\int_{t}^{t+\sigma}e^{2(a+\gamma_1)s}\;ds,\\
     &=&\displaystyle e^{2(\eta+\gamma_1) |t|}\frac{K_0^{2}c_1^{2}|x|^2}{2(a+\gamma_1)}\left [e^{2(a+\gamma_1)\sigma}-1\right]
    \end{array}
    \end{displaymath}
which implies that
 \begin{displaymath}
x^{T}M(t,t+\sigma)x\leq 
    \displaystyle e^{2(\eta+\gamma_1) |t|}\frac{K_0^{2}c_1^{2}\left [e^{2(a+\gamma_1)\sigma}-1\right]}{2(a+\gamma_1)}|x|^2.
    \end{displaymath}

Similarly, we note that
    $$\begin{array}{rcl}
         &&\displaystyle\int_{t}^{t+\sigma}x^{T}\Phi_{A}^{T}(s,t)C^{T}(s)C(s)\Phi_{A}(s,t)x\;ds \\
         &\geq&  \displaystyle\int_{t}^{t+\sigma}\frac{1}{K_0^{2}}e^{-2a|s-t|-2\eta |s|}c_0^{2}e^{-2\gamma_0 |s|}|x|^{2}\;ds,\\
         &\geq&\displaystyle\int_{t}^{t+\sigma}\frac{1}{K_0^{2}}e^{-2(a+\eta+\gamma_0)|s-t|-2(\eta+\gamma_0) |t|}c_0^{2}|x|^{2}\;ds,\\
         &=&\displaystyle\frac{c_0^{2}|x|^2}{K_0^{2}}e^{2(a+\eta+\gamma_0)t}e^{-2(\eta+\gamma_0)|t|}\int_{t}^{t+\sigma}e^{-2(a+\eta+\gamma_0)s}ds,\\
         &=&\displaystyle e^{-2(\eta+\gamma_0)|t|} \frac{c_0^{2}|x|^2}{K_0^22(a+\eta+\gamma_0)}\left[1-e^{-2(a+\eta+\gamma_0)\sigma}\right]
    \end{array}$$
which implies
\begin{displaymath}
x^{T}M(t,t+\sigma)x \geq  \displaystyle e^{-2(\eta+\gamma_0)|t|} \frac{c_0^{2}\left[1-e^{-2(a+\eta+\gamma_0)\sigma}\right]}{K_0^22(a+\eta+\gamma_0)}|x|^2
\end{displaymath}
and the estimate \eqref{Mobserv} is satisfied by considering 
$$\vartheta_0(\sigma)=\displaystyle\frac{c_0^{2}}{K_0^22(a+\eta+\gamma_0)}\left[1-e^{-2(a+\eta+\gamma_0)\sigma}\right],$$
$$\vartheta_{1}(\sigma)=\displaystyle \frac{K_0^{2}c_1^{2}}{2(a+\gamma_1)}\left [e^{2(a+\gamma_1)\sigma}-1\right].$$

Finally, the Theorem \ref{p2l3obsNOUNIFORME} allow us to obtain the estimate \eqref{Mobserv2}, therefore the property of nonuniform complete observability is guaranteed.

As an extra comment, in the previous proof we can notice that $\sigma_0(t)=\sigma_0>0$ is constant and works in order to prove the inequalities for all $t\in\mathbb{R}$.

\subsection{Example 2} 
In \cite[p.5]{ASP} it was showed the scalar control system
\begin{subequations}
  \begin{empheq}[left=\empheqlbrace]{align}
    & \dot{x}=-\displaystyle\frac{x}{t}, \label{ex2a} 
    \\
    & y=x, \label{ex2b}
  \end{empheq}
\end{subequations}
with $t\geq1$, which verify the properties UCO, NUCO and CO on $[1,+\infty)$. In fact, it is easy to check that the transition matrix and Gramian of observability of the system \eqref{ex2a}-\eqref{ex2b} are $\Phi(t,\tau)=\frac{\tau}{t}$ and $M(t_0,t_f)=\frac{t_0 (t_f-t_0)}{t_f}$ respectively. In addition, by considering $\sigma=1$, we have that for any $t\geq1$
$$\frac{1}{2}<M(t,t+1)=\frac{t}{t+1}<1,$$
$$1<N(t,t+1)=\frac{t+1}{t}<2,$$
then UCO property is satisfied and consequently we have the property CC. In relation to the property NUCO, for any $t\geq1$, there exists $\sigma_0=1$ such that for all $\sigma\geq1$:
$$\frac{1}{2}e^{-2\sigma t}<\frac{1}{2}<M(t,t+1)=\frac{t}{t+1}<1<e^{2\sigma t},$$
$$e^{-2\sigma t}<1<N(t,t+1)=\frac{t+1}{t}<2<2e^{2\sigma t}.$$

Moreover, by considering the Figure \ref{fig-NUCO-NUCC}, we can also establish that the following system is NUCO on $(-\infty,-1]$:
\begin{subequations}
  \begin{empheq}[left=\empheqlbrace]{align}
    & \dot{x}=-\displaystyle\frac{x}{t}, \label{ex2aotro} 
    \\
    & y=-x. \label{ex2botro}
  \end{empheq}
\end{subequations}

\subsection{Example 3} In \cite[Subsection 3.3]{HMR} it was proved that the scalar control system
\begin{equation}
\label{ex-BV}
\dot{x}=-tx +\sqrt{2(t-1)}e^{-t+\frac{1}{2}}u(t)\quad \textnormal{with $t\geq 1$}
\end{equation}
is CC but is neither UCC nor NUCC on $[1,+\infty)$. On the other hand, the adjoint system of \eqref{ex-BV} is given by
\begin{subequations}
  \begin{empheq}[left=\empheqlbrace]{align}
    & \dot{x}=tx, \label{ex3a} \\
    & y=-\sqrt{2(t-1)}e^{-t+\frac{1}{2}}x, \label{ex3b}
  \end{empheq}
\end{subequations}

By considering Theorem \ref{adjuntoNU}, Theorem \ref{dualNU} and Remark \ref{RelacionContObsUniforme}, we can conclude that system \eqref{ex3a}-\eqref{ex3b} is CO but is neither UCO nor NUCO on $[1,+\infty)$. Additionally, it is possible ensure that the dual system 
\begin{subequations}
  \begin{empheq}[left=\empheqlbrace]{align}
    & \dot{x}=-tx, \label{ex3aotro} \\
    & y=-\sqrt{2(-t-1)}e^{t+\frac{1}{2}}x, \label{ex3botro}
  \end{empheq}
\end{subequations}
is CO but is neither UCO nor NUCO on $(-\infty,-1]$.
\subsection{Example 4} By considering the scalar control system
\begin{subequations}
  \begin{empheq}[left=\empheqlbrace]{align}
    & \dot{x}=-x\sin(x), \label{ex4a} \\
    & y=x, \label{ex4b}
  \end{empheq}
\end{subequations}
it is straightforward to deduce that its transition matrix is given by $$\displaystyle\Phi(t,s)=e^{t\cos(t)-s\cos(s)+\sin(s)-\sin(t)}$$
and additionally, we can deduce that
$$\|\Phi(t,s)\|\leq e^{2}e^{|t-s|}e^{2|s|},$$
this means that the system \eqref{ex4a} has the nonuniform bounded growth property. In addition, in this system we have that $C(t)=1$, then the inequality  \eqref{Cexample} is verified. Therefore, the conditions of the Example 1 are satisfied and this implies the NUCO property, which allow us to ensure the complete observability.

Now, we will verify that the system \eqref{ex4a}--\eqref{ex4b} cannot be UCO. Indeed, otherwise, 
the Gramian inequalities \eqref{UCO1}--\eqref{UCO2} will be verified
and Theorem \ref{p2l3obs} would implies the existence of $\alpha(\cdot)\in\mathcal{B}$ such that
$$
\Phi(t,s)=e^{t\cos(t)-s\cos(s)+\sin(s)-\sin(t)}\leq \alpha(|t-s|),
$$
which is equivalent to
$$
t\cos(t)-s\cos(s)+\sin(s)-\sin(t)=\ln\left(\alpha(|t-s|\right)).
$$

Now, let us consider $t=2n\pi$  and $s=\frac{3\pi}{2}+2(n-1)\pi$. Then, the above inequality is equivalent to:
$$
2n\pi-1\leq 2\ln\left(\alpha\left(\frac{\pi}{2}\right)\right).
$$

Now, by letting $n\to +\infty$, the above inequality leads to a contradiction.

\begin{remark}
This section allows us to note that indeed the property of \textnormal{NUCO} corresponds to a concept that is halfway between \textnormal{CO} and \textnormal{UCO}.
\end{remark}

\section{Detectability in a time-varying context}\label{sec-detectability}

In order to begin, we will establish the formal definition of the concept of uniform exponential stability.
\begin{definition}
    The linear system
    \begin{equation}
 \label{ALS}
\dot{x}=V(t)x
\end{equation}
is said to be uniformly exponentially stable \textnormal{(UES)} if there exist constants $\beta>0$ and $M\geq1$ such that for all $s\in\mathbb{R}$ and for all $t\geq s$:
$$\|\Phi_{V}(t,s)\|\leq Me^{-\beta(t-s)}.$$
\end{definition}


Formally, we have the following definition of uniform exponential detectability (see \cite{RAVI1992455,Tranninger}):
\begin{definition}
\label{DetExpNU}
    The system \eqref{control1a}-\eqref{control1b} is called uniformly exponentially detectable \textnormal{(UED)} if there exists a uniformly bounded output injection gain $L(t)$, also called observer, such that the linear system \eqref{SistError} is uniformly exponentially stable. 
\end{definition}

\begin{remark}
    It is interesting to note that due to the multiple definitions of stability existing in the nonautonomous framework, the concept of detectability can be adapted to the different properties of stability. Therefore, it motivates the idea that there is a definition of detectability that is suitable to the nonuniform framework, which will be studied in the following subsection. 
\end{remark}

Based on the above, we will now detail the definition of the concept of \textit{nonuniform exponential stability} and then relate this kind of stability with the respective stability properties of the adjoint and dual systems.

\begin{definition}
    The linear system \eqref{ALS} is said to be nonuniformly exponentially stable \textnormal{(NUES)} forward if there exist constant $\beta>0$, $\delta\geq0$  and $M\geq1$ such that for all $s\in\mathbb{R}$ and for all $t\geq s$:
\begin{equation}
\label{NUESInfinito}
\|\Phi_{V}(t,s)\|\leq Me^{\delta |s|}e^{-\beta(t-s)}.
\end{equation}
\end{definition}

\begin{remark}
We emphasize that in Definition 15 ii) in \cite{HMR}, it is imposed the condition $\delta\in[0,\beta)$, which is framed in the spectral perspective discussed in that paper (in addition to considering $J=\mathbb{R}_{0}^{+}$).     
\end{remark}

\begin{definition}
    The linear system \eqref{ALS} is said to be nonuniformly exponentially stable \textnormal{(NUES)} backward if there exist constant $\beta>0$, $\delta>0$  and $M\geq1$ such that for all $s\in\mathbb{R}$ and for all $t\leq s$:
\begin{equation}
\label{NUESMenosInfinito}
\|\Phi_{V}(t,s)\|\leq Me^{\delta |s|}e^{\beta(t-s)}.
\end{equation}
\end{definition}

From the concept of nonuniform exponential stability, we can extend the notion of uniform exponential detectability to this new framework.

\begin{definition}
    The system \eqref{control1a}-\eqref{control1b} is called nonuniformly exponentially detectable \textnormal{(NUED)} if there exists an output injection gain $L(t)$ such that the linear system \eqref{SistError} is \textnormal{NUES} forward.
\end{definition}

\begin{remark}
Given the nonuniform nature of the previous definition, it is reasonable to think that the output injection gain $L(t)$ is not necessarily bounded. 
\end{remark}

In view of the importance that adjoint and dual systems have played for a control system, given the linear system \eqref{ALS} we define its \textit{adjoint linear system} \begin{equation}
    \label{adjuntoV}
    \dot{x}(t)=-V^{T}(t)x(t)
\end{equation}
and its \textit{dual linear system} 
\begin{equation}
    \label{dualV}
    \dot{x}(t)=V^{T}(-t)x(t),
\end{equation} 
denoted by $\Psi_{a}(\cdot,\cdot)$ and $\Psi_{d}(\cdot,\cdot)$ as their transition matrices respectively. In this context, it is interesting to know what kind of stability properties each of these two systems possesses based on the stability of the system \eqref{ALS}, as stated in the following theorem.
\begin{theorem}
    \label{EstabilidadOriginalAdjunto} 
    The following three statements are equivalent:
    \begin{itemize}
        \item [i)] The original system \eqref{ALS} is \textnormal{NUES} forward.
        \item [ii)] The adjoint system \eqref{adjuntoV} is \textnormal{NUES} backward.
        \item [iii)] The dual system \eqref{dualV} is \textnormal{NUES} forward.
    \end{itemize}
\end{theorem}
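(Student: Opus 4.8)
The plan is to reduce each of the two equivalences to the transition-matrix identities already used in the control-system setting, namely the adjoint relation $\Psi_a(t,\tau)=\Phi_V^{T}(\tau,t)$ and the dual relation $\Psi_d(t,\tau)=\Phi_V^{T}(-\tau,-t)$ (the exact analogues of \eqref{Psiadjunto}--\eqref{Psidual}, which hold because the adjoint of \eqref{ALS} has transition matrix $[\Phi_V(t,\tau)^{T}]^{-1}=\Phi_V^{T}(\tau,t)$ and the dual is its time reversal), together with the invariance of the spectral norm under transposition, $\|N^{T}\|=\|N\|$. Since the adjoint of \eqref{adjuntoV} and the dual of \eqref{dualV} both return \eqref{ALS}, it suffices to prove the implications i) $\Rightarrow$ ii) and i) $\Rightarrow$ iii); the converses follow by applying the same computation to $\Phi_V(t,s)=\Psi_a^{T}(s,t)$ and $\Phi_V(t,s)=\Psi_d^{T}(-s,-t)$.

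For i) $\Rightarrow$ ii) I would start from the NUES forward estimate \eqref{NUESInfinito} and observe that, for $t\le s$, $\|\Psi_a(t,s)\|=\|\Phi_V(s,t)\|$; since $s\ge t$ the forward bound applies to $\Phi_V(s,t)$ and gives $\|\Psi_a(t,s)\|\le Me^{\delta|t|}e^{\beta(t-s)}$, which already exhibits the exponential shape $e^{\beta(t-s)}$ demanded by the backward definition \eqref{NUESMenosInfinito}. For i) $\Rightarrow$ iii) the computation is identical after the reflection $t\mapsto -t$, $s\mapsto -s$: the dual identity yields $\|\Psi_d(t,s)\|=\|\Phi_V(-s,-t)\|$, and for $t\ge s$ (so $-s\ge -t$) the forward bound gives $\|\Psi_d(t,s)\|\le Me^{\delta|t|}e^{-\beta(t-s)}$, matching the forward shape \eqref{NUESInfinito}.

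The \emph{main obstacle}, and the only nontrivial point, is that in both computations the nonuniform factor is produced at the wrong endpoint: the definitions \eqref{NUESInfinito}--\eqref{NUESMenosInfinito} require $e^{\delta|s|}$ at the base point $s$, whereas the natural estimates deliver $e^{\delta|t|}$. I would reconcile this with the triangle inequality $|t|\le |s|+|t-s|$, giving $e^{\delta|t|}\le e^{\delta|s|}e^{\delta|t-s|}$, and then absorb the surplus $e^{\delta|t-s|}$ into the exponential term. In the forward case ($t\ge s$) this converts $e^{\delta|t|}e^{-\beta(t-s)}$ into $e^{\delta|s|}e^{-(\beta-\delta)(t-s)}$, and symmetrically $e^{\delta|t|}e^{\beta(t-s)}$ into $e^{\delta|s|}e^{(\beta-\delta)(t-s)}$ in the backward case; thus an estimate of the required form holds with the same $\delta$, the same $M$, and the reduced rate $\beta-\delta$.

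This reduction is precisely where the interplay between $\delta$ and $\beta$ is decisive: the resulting exponent is a genuine decay only when $\beta-\delta>0$, i.e. under the spectral condition $\delta\in[0,\beta)$ recalled in the remark between the two nonuniform-stability definitions, and a short direct computation (taking $t\to-\infty$ with $s$ fixed in the adjoint bound) shows that the equivalence indeed fails when $\delta\ge\beta$. Hence I expect the proof to run cleanly once this condition is in force, the borderline $\delta=0$ being harmless since one may then replace it by an arbitrarily small $\delta>0$ to satisfy the strict requirement in \eqref{NUESMenosInfinito}.
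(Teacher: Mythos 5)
Your proposal is correct and follows essentially the same route as the paper's proof: the identities $\Psi_a(t,s)=\Phi_V^{T}(s,t)$ and $\Psi_d(t,s)=\Phi_V^{T}(-s,-t)$ (the analogues of \eqref{Psiadjunto}--\eqref{Psidual}), invariance of the norm under transposition, and then a transfer of the nonuniform weight from the endpoint $t$ to the endpoint $s$. The one step where you diverge is precisely the step you single out as the main obstacle, and there your version is the careful one. The paper transfers the weight via $e^{\delta|t|}\le e^{-\delta(s-t)}e^{\delta|s|}$, i.e. $|t|+(s-t)\le|s|$ for $t\le s$; this is the triangle inequality written in the wrong direction, it holds (with equality) only when $t\ge 0$ and fails whenever $t<0$. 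That reversed inequality is what allows the paper to conclude with the \emph{improved} rate $\beta+\delta$ and no restriction on $\delta$. Your transfer via $|t|\le|s|+|t-s|$ is the valid one, and it necessarily degrades the rate to $\beta-\delta$, which is why you must assume $\delta\in[0,\beta)$.

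That restriction is not cosmetic: on $J=\mathbb{R}$ the equivalence genuinely fails without it, as you suspected. A concrete scalar witness is $v(t)=\delta-\beta$ for $t<0$ and $v(t)=-\beta$ for $t>0$ with $\delta>\beta$: one checks directly that \eqref{ALS} satisfies \eqref{NUESInfinito} with these constants and $M=1$, while for the adjoint \eqref{adjuntoV} the quantity $\|\Psi_a(t,0)\|=e^{(\delta-\beta)|t|}$ blows up as $t\to-\infty$, so no choice of constants can realize \eqref{NUESMenosInfinito}. So your proof establishes the theorem under the additional hypothesis $\delta\in[0,\beta)$ (the condition the paper itself recalls from Definition 15 of \cite{HMR} but does not impose here), and that hypothesis cannot be dropped; the defect lies in the paper's own proof, not in your argument. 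Your handling of the borderline case $\delta=0$ and the reduction of the converses to the same computation are both fine.
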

\begin{proof} The proof will consist of verifying the equivalences $i)$ with $ii)$, and $i)$ with $iii)$.
\begin{itemize}
    \item $i)\Rightarrow ii)$ Let be  $\beta>0$, $\delta\geq0$ and $M\geq1$ constants such that verify
    $$\left\|\Phi_{V}(s,t)\right\|\leq Me^{-\beta(s-t)}e^{\delta|t|}$$
    for any $t\leq s$. Based on the previous estimate and by using the relation \eqref{Psiadjunto}, we have that for any $t\leq s$:
    $$\begin{array}{rcl}
\left \| \Psi_{a}(t,s)\right \|&=&\left\|\Phi_{V}^{T}(s,t)\right\|\leq Me^{-\beta(s-t)}e^{\delta|t|},\\
&\leq& Me^{-\beta(s-t)}e^{-\delta(s-t)}e^{\delta|s|},\\
&\leq&Me^{(\beta+\delta)(t-s)}e^{\delta|s|}.
\end{array}
$$
\item $ii)\Rightarrow i)$ Let be $\beta_{a}>0$, $\delta_{a}\geq0$ and $M_{a}\geq1$ constants such that verify
$$\left \| \Psi_{a}(t,s)\right \|\leq M_{a}e^{\beta_{a}(t-s)}e^{\mu_{a}|s|},$$
for any $t\leq s$. Then by considering the previous estimate and the relation \eqref{Psiadjunto}, we have that for any $t\leq s$:
$$\begin{array}{rcl}
\left \| \Phi_{V}(s,t)\right \|&=&\left\|\Psi_{a}^{T}(t,s)\right\|\leq M_{a}e^{\beta_{a}(t-s)}e^{\delta_{a}|s|},\\
&\leq& M_{a}e^{-\beta_{a}(s-t)}e^{-\delta_{a}(s-t)}e^{\delta_{a}|-t|},\\
&\leq&M_{a}e^{-(\beta_{a}+\delta_{a})(s-t)}e^{\delta_{a}|t|}.
\end{array}
$$

\item $i)\Rightarrow iii)$ Let be $\beta>0$, $\delta\geq0$ and $M\geq1$ constants such that verify \eqref{NUESInfinito} for any $t\geq s$ by replacing $t$ by $-s$ and $s$ by $-t$. Therefore, by using the previous estimate and \eqref{Psidual}, we have that for any $t\geq s$:
    $$\begin{array}{rcl}
\left \| \Psi_{d}(t,s)\right \|&=&\left\|\Phi_{V}^{T}(-s,-t)\right\|,\\
&\leq& Me^{-\beta(-s-(-t))}e^{\delta|-t|},\\
&\leq& Me^{-\beta(t-s)}e^{-\delta(t-s)}e^{\delta|-s|},\\
&\leq&Me^{-(\beta+\delta)(t-s)}e^{\delta|s|}.
\end{array}$$

\item $iii)\Rightarrow i)$ Let be $\beta_{d}>0$, $\delta_{d}\geq0$ and $M_{d}\geq1$ constants such that verify
$$\left \| \Psi_{d}(-s,-t)\right \|\leq M_{d}e^{-\beta_{d}(-s-(-t))}e^{\mu_{d}|-t|},$$
for any $t\geq s$. Then by considering the previous estimate and the relation \eqref{Psidual}, we have that for any $t\geq s$:
$$\begin{array}{rcl}
\left \| \Phi_{V}(t,s)\right \|&=&\left\|\Psi_{d}^{T}(-s,-t)\right\|,\\
&\leq& M_{d}e^{-\beta_{d}(-s-(-t))}e^{\delta_{d}|-t|},\\
&\leq& M_{d}e^{-\beta_{d}(t-s)}e^{-\delta_{d}(t-s)}e^{\delta_{d}|-s|},\\
&\leq&M_{d}e^{-(\beta_{d}+\delta_{d})(t-s)}e^{\delta_{d}|s|}.
\end{array}$$
\end{itemize}
\end{proof}
Based on the previous theorem, we emphasize the importance of relating the stability of the original system and its dual system, since this will allow connecting the concepts of observability and detectability with controllability and stabilizability in the nonuniform framework.

\section{Observability and detectability; main result}\label{sec-main-result}

Based on the theorem of the previous section, we can appreciate that the forward stability of the \eqref{ALS} system is equivalent to the same type of stability of system \eqref{dualV}. From this, it is useful to identify this relation in order to prove the main result of this section; nonuniform complete observability guarantees nonuniform exponential detectability, which is stated as follows: 
\begin{theorem}
\label{ObsImplicaDetect}
    If the system \eqref{control1a}-\eqref{control1b} is \textnormal{NUCO} on $\mathbb{R}$, then it is \textnormal{NUED}, that means, the system \eqref{SistError} is \textnormal{NUES} forward.  
\end{theorem}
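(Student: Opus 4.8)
The plan is to turn the detectability problem for the pair $(A,C)$ into a \emph{stabilizability} problem for its time-reversed dual, using precisely the three duality bridges already available: Theorem \ref{dualNU} (NUCO $\Leftrightarrow$ dual is NUCC), the nonuniform complete stabilizability of NUCC systems from \cite{HMR}, and the stability equivalence of Theorem \ref{EstabilidadOriginalAdjunto}. The philosophy is the classical ``detectability is dual to stabilizability'', but the nonuniform forward-in-time setting forces the time reversal $t\mapsto -t$ to appear, which is exactly why the dual system \eqref{dual1a}--\eqref{dualV}, rather than the adjoint, is the correct object: the adjoint route would only deliver NUES \emph{backward} for the error system, whereas the dual route preserves forward stability.

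First I would apply Theorem \ref{dualNU}: since \eqref{control1a}--\eqref{control1b} is NUCO on $\mathbb{R}$, the dual control system \eqref{dual1a}, namely $\dot{x}=A^{T}(-t)x+C^{T}(-t)u$, is NUCC on $\mathbb{R}$. Next I would invoke the stabilizability of NUCC systems established in \cite{HMR} for $J=\mathbb{R}_{0}^{+}$, transported to $J=\mathbb{R}$ in the same manner that Theorem \ref{p2l3ControlR} generalizes Theorem \ref{p2l3}. This furnishes a feedback gain $K(t)$ for the dual plant such that the closed-loop matrix
\[
P(t):=A^{T}(-t)+C^{T}(-t)K(t)
\]
generates a system $\dot{x}=P(t)x$ that is NUES forward.

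The bookkeeping step is then to choose the observer gain $L(t):=-K^{T}(-t)$ and verify that, with this choice, the dual (in the sense of \eqref{dualV}) of the error plant $V(t):=A(t)-L(t)C(t)$ coincides with the stabilized dual plant $P(t)$. Indeed,
\[
V^{T}(-t)=A^{T}(-t)-C^{T}(-t)L^{T}(-t)=A^{T}(-t)+C^{T}(-t)K(t)=P(t).
\]
Applying the equivalence $i)\Leftrightarrow iii)$ of Theorem \ref{EstabilidadOriginalAdjunto} to $V$, the forward NUES of the dual system $\dot{x}=V^{T}(-t)x=P(t)x$ transfers to forward NUES of the original system $\dot{e}=V(t)e=[A(t)-L(t)C(t)]e$, which is exactly \eqref{SistError}. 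Hence $L$ is an admissible output-injection gain and the system is NUED.

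The main obstacle I expect is the stabilizability step, since the excerpt only records NUCC together with its Gramian characterization, not an explicitly numbered stabilizability theorem on $J=\mathbb{R}$; one must carefully transport the $\mathbb{R}_{0}^{+}$ feedback construction of \cite{HMR}, tracking the nonuniform constants $\beta$ and $\delta$ so that the closed loop is genuinely NUES \emph{forward} rather than merely bounded, and confirming that the reversed gain $L(t)=-K^{T}(-t)$ is still measurable and bounded on finite intervals so as to qualify as an observer (its possible global unboundedness being admissible, as already anticipated in the remark following the definition of NUED). A secondary care point is keeping the sign and time-reversal conventions of $V^{T}(-t)$ consistent throughout, so that the identity $V^{T}(-t)=P(t)$ holds exactly and the invocation of Theorem \ref{EstabilidadOriginalAdjunto} is legitimate.
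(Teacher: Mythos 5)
Your proposal is correct and follows essentially the same route as the paper: Theorem \ref{dualNU} to pass to the NUCC dual system, the stabilizability result (the paper's Theorem \ref{T2}, which is exactly the transport of the $\mathbb{R}_{0}^{+}$ construction of \cite{HMR} to $J=\mathbb{R}$ that you anticipate needing), and then the equivalence $i)\Leftrightarrow iii)$ of Theorem \ref{EstabilidadOriginalAdjunto} to carry forward NUES back to the error system $\dot{e}=[A(t)-L(t)C(t)]e$. Your gain $L(t)=-K^{T}(-t)$ coincides, up to the sign convention in the feedback, with the paper's choice $L(t)=F^{T}(t)$, so the bookkeeping identity $V^{T}(-t)=P(t)$ is exactly the step the paper performs implicitly.
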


The way to proceed to prove this theorem will be to focus on the results of section 4 of the work \cite{HMR}; namely, to generalize to the whole real line the different results that prove the close relationship between controllability, the matrix Riccati equations and stabilizability via feedback gain.

\subsection{Generalization of auxiliary results}
We emphasize that the results to be presented below have similar statements to those in section 4 of \cite{HMR}, except for the fact that in this work when studying the behavior on the whole real line, in the different estimates the term $|t|$ should appear instead of $t$.

Firstly, let us consider the family of Riccati differential equations parametrized by the number $\mathcal{L}>0$: 
\begin{equation}
\label{ER}
\begin{array}{c} 
\dot{S}(t)+\left(A(t)+\mathcal{L} I\right)^{T} S(t)+S(t)\left(A(t)+ \mathcal{L} I\right)\\
-S(t) B(t) B^{T}(t) S(t)=-I, 
\end{array}
\end{equation}
where $A(\cdot)$ and $B(\cdot)$ are the matrices of the control system \eqref{LTVControlabilidad}. The following result is the backbone of this subsection, being a generalization of Theorem 26 in \cite{HMR}.

\begin{theorem}
\label{T2}
Assume that the linear control system \eqref{LTVControlabilidad} is \textnormal{NUCC} on $\mathbb{R}$ and that the following additional properties are verified:
\begin{itemize}
\item[\textnormal{\textbf{(H1)}}] The corresponding plant has the property of nonuniform bounded growth on $\mathbb{R}$ with
constants $(K_{0},a,\eta)$,
\item[\textnormal{\textbf{(H2)}}] The property \eqref{alpha0alpha1enR} is verified with constants $\mu_{0}>0$ and $\mu_{1}>0$.
\end{itemize}

In addition, if $t\mapsto S_{\mathcal{L}}(t)$ is a solution of the Riccati equation \eqref{ER}
with $\mathcal{L}>2\theta_{1}$, where $\theta_{1}=\mu_{1}+4\eta$, then there exist a constant $M\geq 1$ and a feedback gain $F(t)$ such that the system \eqref{ALS} is \textnormal{NUES} forward, with $V(t)=A(t)-B(t)F(t)$, that is, for any $t\geq s$: 
\begin{equation}
\label{NUES}
\|\Phi_{A-BF}(t,s)\|\leq Me^{(\theta_{1}+\theta_{2})|s|} e^{-(\mathcal{L}-\theta_{1})(t-s)},
\end{equation}
where $\theta_{2}=\eta+2(\mu_{1}+\mu_{0})$. 
\end{theorem}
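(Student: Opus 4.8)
The plan is to manufacture the stabilizing feedback directly from the Riccati solution and to use $S_{\mathcal{L}}(t)$ itself as a nonuniform Lyapunov function. First I would set $F(t) = B^{T}(t)S_{\mathcal{L}}(t)$, so that the closed-loop plant in \eqref{NUES} is $V(t) = A(t) - B(t)B^{T}(t)S_{\mathcal{L}}(t)$, and then reduce the whole statement to two facts: (a) a two-sided nonuniform bound on the Riccati solution of the form $c_0\,e^{-2\theta_1|t|}I \leq S_{\mathcal{L}}(t) \leq c_1\,e^{2\theta_2|t|}I$ with $c_0,c_1>0$ independent of $t$; and (b) a Lyapunov decay for $W(t):=x(t)^{T}S_{\mathcal{L}}(t)x(t)$ along closed-loop trajectories $\dot{x}=V(t)x$. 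The exponents must come out to be exactly the lower exponent $\theta_1=\mu_1+4\eta$ and the upper exponent $\theta_2=\eta+2(\mu_1+\mu_0)$ appearing in the statement.

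Fact (b) is the routine half. Differentiating $W$ along a closed-loop trajectory and substituting \eqref{ER}, the cross terms $A^{T}S_{\mathcal{L}}+S_{\mathcal{L}}A$ produced by $V^{T}S_{\mathcal{L}}+S_{\mathcal{L}}V$ cancel against those coming from $\dot{S}_{\mathcal{L}}$, leaving
\[
\dot{W} = -|x|^2 - 2\mathcal{L}\,W - |B^{T}(t)S_{\mathcal{L}}(t)x|^2 .
\]
Since the lower bound in (a) gives $S_{\mathcal{L}}(t)\geq 0$, hence $W\geq 0$, all three terms are nonpositive and in particular $\dot{W}\leq -2\mathcal{L}W$, so $W(t)\leq e^{-2\mathcal{L}(t-s)}W(s)$ for every $t\geq s$. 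Feeding (a) into this gives $c_0 e^{-2\theta_1|t|}|x(t)|^2 \leq e^{-2\mathcal{L}(t-s)}c_1 e^{2\theta_2|s|}|x(s)|^2$, and absorbing the $e^{2\theta_1|t|}$ factor through the elementary estimate $|t|\leq |s|+(t-s)$ (valid for $t\geq s$) yields $|x(t)| \leq \sqrt{c_1/c_0}\,e^{(\theta_1+\theta_2)|s|}e^{-(\mathcal{L}-\theta_1)(t-s)}|x(s)|$; taking the supremum over unit initial states $x(s)$ is exactly \eqref{NUES} with $M=\sqrt{c_1/c_0}$. Here the hypothesis $\mathcal{L}>2\theta_1$ guarantees the genuinely negative exponent $\mathcal{L}-\theta_1>0$.

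The substantive part is (a), which is the generalization to $J=\mathbb{R}$ of the bounds in Theorem 26 of \cite{HMR}: every occurrence of $t$ in the exponents there must be replaced by $|t|$. I would realize $S_{\mathcal{L}}$ as the minimal nonnegative solution, identified with the cost-to-go of the linear-quadratic problem for the shifted plant $\bar{A}(t)=A(t)+\mathcal{L}I$ (whose transition matrix is $e^{\mathcal{L}(t-s)}\Phi_A(t,s)$), so that $x^{T}S_{\mathcal{L}}(t)x$ is an infimum of $\int_t^{\infty}(|x|^2+|u|^2)$. The upper bound then follows by inserting the minimum-energy control that steers the shifted plant to the origin over one window $[t,t+\sigma]$ with $\sigma\geq\sigma_0(t)$: its cost is governed by the inverse controllability Gramian together with the transient state penalty, so the \textnormal{NUCC} lower estimate in \eqref{alpha0alpha1enR} (bounding $\bar{W}^{-1}$, contributing $e^{2\mu_0|t|}$) and the nonuniform bounded growth \textbf{(H1)} (bounding the state penalty, contributing the $\eta$- and $\mu_1$-dependent factors) assemble into $\theta_2=\eta+2(\mu_1+\mu_0)$. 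The lower bound comes from bounding the cost-to-go below by the optimal state penalty over a single window, controlled again via \textbf{(H1)} and the upper Gramian estimate in \eqref{alpha0alpha1enR}, producing $\theta_1=\mu_1+4\eta$.

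I expect the main obstacle to be precisely the bookkeeping in (a): one must track the nonuniform weights $e^{\pm2\theta|t|}$ through the passage to the shifted plant, the inversion of the Gramian, and the state-penalty integrals, and verify that they consolidate into exactly $\theta_1$ and $\theta_2$ — and, crucially, that $c_0,c_1$ are \emph{uniform} in $t$, which is where \textbf{(H2)} and the two-sided estimates \eqref{alpha0alpha1enR}--\eqref{KenR} enter. By contrast, the Lyapunov computation (b) and the final assembly are essentially mechanical once (a) is in hand.
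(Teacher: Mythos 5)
Your proposal is correct and follows essentially the same path as the paper (which itself defers the details to Theorem 26 of \cite{HMR}): the Riccati solution is used as a nonuniform Lyapunov function, its two-sided bounds $c_0e^{-2\theta_1|t|}I\leq S_{\mathcal{L}}(t)\leq c_1e^{2\theta_2|t|}I$ are extracted from the NUCC Gramian estimates \eqref{alpha0alpha1enR} together with the nonuniform bounded growth, and the Lyapunov decay combined with $|t|\leq|s|+(t-s)$ yields \eqref{NUES}. The only organizational difference is that you work directly with the closed loop $A-BB^{T}S_{\mathcal{L}}$ and harvest the full factor $e^{-2\mathcal{L}(t-s)}$ from the Riccati equation, whereas the paper passes through the half-shifted system $A+\frac{\mathcal{L}}{2}I-BF$ (via Proposition \ref{T3} and Lemma \ref{L1}) and shifts back at the end with $\Phi_{A-BF+\frac{\mathcal{L}}{2}I}(t,s)=\Phi_{A-BF}(t,s)e^{\frac{\mathcal{L}}{2}(t-s)}$ --- an equivalent bookkeeping.
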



The proof of Theorem \ref{T2} essentially follows the same path that in \cite{HMR}. This result is extended to whole real line by considering the expression $|t|$ instead of $t$ in the nonuniform part of all estimates. The impact of this change does not generate substantial modifications in the proofs, except for some subtleties that must be taken into account when obtaining estimates of the different results.

In order to illustrate this fact, we will detail the proof of the following lemma.

\begin{lemma}
\label{evolutionoperatorresult}
If the linear control system \eqref{LTVControlabilidad} 
satisfies the hypothesis \textnormal{\textbf{(H1)}} 
then there exist functions $\zeta_1(\cdot)$ and $\zeta_2(\cdot)$ such that for any $\sigma>0$ we have that:
\begin{equation}
\label{integralPhi}
\zeta_1(\sigma)e^{-2\eta |t|}I\leq\displaystyle\int_{t}^{t+\sigma}\Phi_{A}^{T}(s,t)\Phi_{A}(s,t)\;ds\leq \zeta_2(\sigma)e^{2\eta |t|}I.
\end{equation}
\end{lemma}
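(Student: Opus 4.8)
The plan is to reduce the asserted operator inequality to a pair of scalar estimates. For any fixed $t$ and any $x\in\mathbb{R}^{n}$ one has
$$
x^{T}\left(\int_{t}^{t+\sigma}\Phi_{A}^{T}(s,t)\Phi_{A}(s,t)\;ds\right)x=\int_{t}^{t+\sigma}|\Phi_{A}(s,t)x|^{2}\;ds,
$$
and since the integrand is symmetric and positive semidefinite it suffices to produce functions $\zeta_1,\zeta_2$ with $\zeta_1(\sigma)e^{-2\eta|t|}|x|^{2}\leq\int_{t}^{t+\sigma}|\Phi_{A}(s,t)x|^{2}\,ds\leq\zeta_2(\sigma)e^{2\eta|t|}|x|^{2}$ for every $x$; this is exactly \eqref{integralPhi}.

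For the upper bound I would use $|\Phi_{A}(s,t)x|\leq\|\Phi_{A}(s,t)\|\,|x|$ together with \textbf{(H1)} applied to $\Phi_{A}(s,t)$, which for $s\in[t,t+\sigma]$ gives $\|\Phi_{A}(s,t)\|\leq K_{0}e^{\eta|t|}e^{a(s-t)}$. Squaring and integrating, $\int_{t}^{t+\sigma}e^{2a(s-t)}\,ds=\tfrac{1}{2a}(e^{2a\sigma}-1)$, so the estimate holds with $\zeta_2(\sigma)=K_{0}^{2}\tfrac{e^{2a\sigma}-1}{2a}$. This half is immediate because the nonuniform weight produced by \textbf{(H1)} is already anchored at the base point $t$.

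For the lower bound I would start from the transition matrix property $x=\Phi_{A}(t,s)\Phi_{A}(s,t)x$, which yields $|x|\leq\|\Phi_{A}(t,s)\|\,|\Phi_{A}(s,t)x|$ and hence $|\Phi_{A}(s,t)x|\geq|x|/\|\Phi_{A}(t,s)\|$. Here \textbf{(H1)} must be applied to $\Phi_{A}(t,s)$, and it produces a weight $e^{\eta|s|}$ anchored at $s$ rather than at $t$: this is precisely the subtlety of working on all of $\mathbb{R}$. I would resolve it with the triangle inequality $|s|\leq|t|+|s-t|=|t|+(s-t)\leq|t|+\sigma$, giving $\|\Phi_{A}(t,s)\|\leq K_{0}e^{\eta|t|}e^{(\eta+a)\sigma}$ on $[t,t+\sigma]$. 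Taking reciprocals, squaring, and integrating over an interval of length $\sigma$ delivers the lower estimate with $\zeta_1(\sigma)=\sigma K_{0}^{-2}e^{-2(\eta+a)\sigma}$, and the two scalar bounds together give \eqref{integralPhi}. The only genuine obstacle is this lower estimate: in the $\mathbb{R}_{0}^{+}$ setting of \cite{HMR} the ordering $s\geq t\geq0$ forces $|s|=s=t+(s-t)$ directly, whereas on $\mathbb{R}$ one pays the extra factor $e^{2\eta\sigma}$ in $\zeta_1$ to trade $e^{\eta|s|}$ for $e^{\eta|t|}$; everything else is routine integration.
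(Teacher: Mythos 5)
Your proposal is correct and follows essentially the same route as the paper: reduce to the scalar estimate for $x^{T}(\cdot)x$, get the upper bound directly from \textbf{(H1)} applied to $\Phi_{A}(s,t)$, and get the lower bound from $x=\Phi_{A}(t,s)\Phi_{A}(s,t)x$ together with the triangle inequality $|s|\leq|t|+(s-t)$ to convert the weight $e^{\eta|s|}$ into $e^{\eta|t|}$. The only (immaterial) difference is that the paper keeps the factor $e^{-2(a+\eta)(s-t)}$ inside the integral and integrates it exactly, obtaining $\zeta_1(\sigma)=\frac{1}{2K_0^{2}(a+\eta)}\bigl[1-e^{-2(a+\eta)\sigma}\bigr]$, whereas you bound the integrand by its worst value on $[t,t+\sigma]$ and get the slightly cruder but equally valid $\zeta_1(\sigma)=\sigma K_0^{-2}e^{-2(a+\eta)\sigma}$.
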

\begin{proof}
By \textbf{(H1)} we know that the plant of \eqref{LTVControlabilidad} has the property of nonuniform bounded growth with 
constants $(K_0,a,\eta)$. Then, by using (\ref{Phibound}) we can write 
    $$
    \displaystyle
    \frac{1}{K_0^{2}}e^{-2a|s-t|-2\eta |s|}|x|^{2}\leq |\Phi_{A}(s,t
    )x|^{2}\leq K_0^{2}e^{2a|s-t|+2\eta |t|}|x|^{2},$$ 
for any $x\neq 0$, where the left estimation is obtained by noticing that $|x|=|\Phi_{A}^{T}(t,s)\Phi_{A}^{T}(s,t)x|$ combined
with (\ref{Phibound}). Now, we can deduce
$$\begin{array}{ll}
&\displaystyle\int_{t}^{t+\sigma}\frac{1}{K_0^{2}}e^{-2a|s-t|-2\eta |s|}|x|^{2}\;ds\leq\displaystyle\int_{t}^{t+\sigma}|\Phi_{A}^{T}(s,t)x|^{2}\;ds,\\
    &\leq\displaystyle\int_{t}^{t+\sigma} K_0^{2}e^{2a|s-t|+2\eta |t|}|x|^{2}\;ds.
    \end{array}
    $$

In relation to the left hand inequality, we can ensure that
$$
    \begin{array}{ll}
    &\displaystyle\int_{t}^{t+\sigma} \frac{1}{K_0^{2}}e^{-2a|s-t|-2\eta |s|}|x|^{2}\;ds\\
    &\geq\displaystyle\frac{1}{K_0^{2}}\displaystyle\int_{t}^{t+\sigma}e^{-2(a+\eta)(s-t)}e^{-2\eta|t|} |x|^2 ds,\\
    &=\displaystyle\frac{1}{K_0^{2}2(a+\eta)}\left[1-e^{-2(a+\eta)\sigma}\right]e^{-2\eta|t|}|x|^{2}
    \end{array}
    $$
and about the right expression we have that
$$
    \begin{array}{ll}
    &\displaystyle\int_{t}^{t+\sigma} K_0^{2}e^{2a|s-t|+2\eta |t|}|x|^{2}\;ds\\
    &\leq\displaystyle K_0^{2}\int_{t}^{t+\sigma}e^{2a(s-t)}e^{2\eta|t|}|x|^2 ds,\\
    &=\displaystyle \frac{K_0^{2}}{2a} \left[e^{2a\sigma}-1\right]e^{2\eta |t|}|x|^{2}
    \end{array}
    $$
and (\ref{integralPhi}) follows by considering $$\zeta_1(\sigma)=\displaystyle\frac{1}{K_0^{2}2(a+\eta)}\left[1-e^{-2(a+\eta)\sigma}\right]$$
and
$$\displaystyle\zeta_2(\sigma)= \frac{K_0^{2}}{2a} \left[e^{2a\sigma}-1\right].$$
\end{proof}

Considering the same techniques, the following results are obtained, which are direct extensions of those presented in section 4 of the paper \cite{HMR}.

\begin{proposition} 
\label{T3}
If the linear system \eqref{ALS}
verifies the following properties:

\begin{itemize}
\item[i)] There exists a positive definite operator $S(t)\in M_{n}(\mathbb{R})$ of class $C^1(\mathbb{R})$ and constants $C_1>0$, $C_2>0$, $\phi_1\geq0$ and $\phi_2\geq0$ such that
\begin{equation}
\label{CotaparaS}
C_1e^{-2\phi_1 |t|}I\leq S(t) \leq C_2 e^{2\phi_2 |t|}I, 
\end{equation}
\item[ii)] There exists a constant $\mathcal{L}>2\phi_1$ such that
\begin{equation}
\label{RiccatiS}
\dot{S}(t)+V^{T}(t) S(t)+S(t) V(t) \leq-(\mathrm{Id}+\mathcal{L} S(t)),
\end{equation}
\end{itemize}
then system \eqref{ALS} is \textnormal{NUES} forward, namely, there exist constants $M\geq 1$, $\lambda=\frac{\mathcal{L}}{2}-\phi_{1}>0$
and $\varepsilon=\phi_{1}+\phi_{2}\geq0$ such that
\begin{equation*}
\|\Phi_{V}(t,s)\|\leq  Me^{(\phi_1+\phi_2)|s|}e^{-(\frac{\mathcal{L}}{2}-\phi_1)(t-s)}  \quad \textnormal{for any $t\geq s$}.
\end{equation*}

\end{proposition}

\begin{lemma}
\label{L1}
If the linear control system \eqref{LTVControlabilidad} is \textnormal{NUCC} on $\mathbb{R}$, then the perturbed control system
\begin{equation}
\label{LTV-l}
\dot{z}(t)=[A(t)+\ell I]z(t)+B(t)u(t),
\end{equation}
with $\ell>0$, is also \textnormal{NUCC} on $\mathbb{R}$.
\end{lemma}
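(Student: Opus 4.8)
The plan is to exploit the elementary but crucial fact that shifting the plant by $\ell I$ only rescales the transition matrix by a scalar exponential. Indeed, one checks directly that $\Psi(t,s)=e^{\ell(t-s)}\Phi_{A}(t,s)$ solves $\dot{\Psi}(t,s)=[A(t)+\ell I]\Psi(t,s)$ with $\Psi(s,s)=I$, so by uniqueness the transition matrix of \eqref{LTV-l} is $\Phi_{A+\ell I}(t,s)=e^{\ell(t-s)}\Phi_{A}(t,s)$. Denoting by $W_{\ell}$ and $K_{\ell}$ the controllability Gramians of \eqref{LTV-l}, this identity turns the factor $e^{\ell(a-s)}$ into the integrand, giving $W_{\ell}(a,b)=\int_{a}^{b}e^{2\ell(a-s)}\Phi_{A}(a,s)B(s)B^{T}(s)\Phi_{A}^{T}(a,s)\,ds$. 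The decisive observation is that, on the window $a=t$, $b=t+\sigma$, the exponent $2\ell(t-s)$ ranges only over $[-2\ell\sigma,0]$, so the extra weight $e^{2\ell(t-s)}$ is bounded between $e^{-2\ell\sigma}$ and $1$; these bounds depend on $\sigma$ alone and never interact with the nonuniform weights $e^{\pm 2\mu|t|}$.

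First I would transfer the $W$-estimate. Since $e^{-2\ell\sigma}\le e^{2\ell(t-s)}\le 1$ on $[t,t+\sigma]$, monotonicity of the integral in the positive semidefinite order yields $e^{-2\ell\sigma}W(t,t+\sigma)\le W_{\ell}(t,t+\sigma)\le W(t,t+\sigma)$. Inserting the hypothesis that \eqref{LTVControlabilidad} is NUCC on $\mathbb{R}$, i.e. \eqref{alpha0alpha1enR}, I obtain $e^{-2\mu_0|t|}\big(e^{-2\ell\sigma}\alpha_0(\sigma)\big)I\le W_{\ell}(t,t+\sigma)\le e^{2\mu_1|t|}\alpha_1(\sigma)I$, which is exactly \eqref{alpha0alpha1enR} for \eqref{LTV-l} with the \emph{same} constants $\mu_0,\mu_1$ and the new $\sigma$-functions $\widetilde{\alpha}_0(\sigma)=e^{-2\ell\sigma}\alpha_0(\sigma)$ and $\widetilde{\alpha}_1(\sigma)=\alpha_1(\sigma)$.

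Next I would transfer the $K$-estimate using $\Phi_{A+\ell I}(t+\sigma,t)=e^{\ell\sigma}\Phi_{A}(t+\sigma,t)$, which gives $K_{\ell}(t,t+\sigma)=e^{2\ell\sigma}\Phi_{A}(t+\sigma,t)W_{\ell}(t,t+\sigma)\Phi_{A}^{T}(t+\sigma,t)$. Since congruence by $\Phi_{A}(t+\sigma,t)$ preserves the semidefinite order, the two-sided bound on $W_{\ell}$ above collapses the $\sigma$-factors and yields $K(t,t+\sigma)\le K_{\ell}(t,t+\sigma)\le e^{2\ell\sigma}K(t,t+\sigma)$; combining with \eqref{KenR} for the unperturbed system produces \eqref{KenR} for \eqref{LTV-l}, again with unchanged $\widetilde{\mu}_0,\widetilde{\mu}_1$ and the functions $\widetilde{\beta}_0(\sigma)=\beta_0(\sigma)$, $\widetilde{\beta}_1(\sigma)=e^{2\ell\sigma}\beta_1(\sigma)$. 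Keeping the same $\sigma_0(t)$, both defining inequalities of Definition \ref{DEFNUCCenR} hold, so \eqref{LTV-l} is NUCC on $\mathbb{R}$. There is no real obstacle here; the only point requiring care is bookkeeping, namely verifying that every perturbation factor introduced by $\ell$ is a function of $\sigma$ only, so that it is absorbed into the $\sigma$-dependent functions and leaves the nonuniform exponents $e^{\pm 2\mu|t|}$ (and hence the nonuniformity constants) untouched.
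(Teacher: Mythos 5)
Your proof is correct and is essentially the standard argument: the paper itself omits the proof of this lemma, deferring to the analogous result in \cite{HMR}, where the same identity $\Phi_{A+\ell I}(t,s)=e^{\ell(t-s)}\Phi_{A}(t,s)$ is the key step. Your bookkeeping is right — the extra factors $e^{\pm 2\ell\sigma}$ depend only on $\sigma$ and are absorbed into the functions $\alpha_i,\beta_i$, leaving the nonuniform exponents $e^{\pm 2\mu|t|}$ and the window function $\sigma_0(t)$ unchanged — so nothing further is needed.
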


By considering the control system \eqref{LTV-l} with the particular choice $\ell=\frac{\mathcal{L}}{2}$ and the linear feedback input $u(t)=-F(t)z(t)$, then we have the linear system
\begin{equation}
\label{A+LI-BF}
\dot{z}(t)=\left[A(t)+\frac{\mathcal{L}}{2} I-B(t)F(t)\right]z(t).
\end{equation}



Subsequently, the following lemma establishes the stabilization via feedback of the system \eqref{LTV-l}.

\begin{lemma}
Under the assumptions of Theorem \textnormal{\ref{T2}}, for any $\mathcal{L}>2\theta_{1}$, with $\theta_{1}=\mu_{1}+4\eta$, then there exist $M\geq 1$
and a feedback gain $F(t)$ such that the shifted control system \eqref{A+LI-BF} is \textnormal{NUES}, namely, for all $t\geq s$:
\begin{equation}
\label{est-sof}
\|\Phi_{A+\frac{\mathcal{L}}{2}I-BF}(t,s)\|\leq Me^{(\frac{\mathcal{L}}{2}-\theta_1)(t-s)+(\theta_1+\theta_2) |s|},
\end{equation}
where $\theta_{2}=\eta+2(\mu_{1}+\mu_{0})$.
\end{lemma}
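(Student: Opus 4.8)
The plan is to treat the shifted closed-loop plant in \eqref{A+LI-BF} as an explicit scalar perturbation of the plant $A-BF$ already stabilized in Theorem \ref{T2}, and to transport the forward estimate \eqref{NUES} across this perturbation. The key observation is that adding a scalar multiple of the identity to a coefficient matrix multiplies its transition matrix by a pure exponential: for any measurable, locally bounded matrix function $P(\cdot)$ and any $c\in\mathbb{R}$,
\begin{equation*}
\Phi_{P+cI}(t,s)=e^{c(t-s)}\,\Phi_{P}(t,s),
\end{equation*}
which follows at once by differentiating the right-hand side in $t$ and noting that it reduces to $I$ at $t=s$. Taking $P=A-BF$ with the same feedback gain $F$ furnished by Theorem \ref{T2} and $c=\frac{\mathcal{L}}{2}$, and using $A+\frac{\mathcal{L}}{2}I-BF=(A-BF)+\frac{\mathcal{L}}{2}I$, we obtain
\begin{equation*}
\Phi_{A+\frac{\mathcal{L}}{2}I-BF}(t,s)=e^{\frac{\mathcal{L}}{2}(t-s)}\,\Phi_{A-BF}(t,s).
\end{equation*}

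From here the bound is immediate. Taking norms and inserting \eqref{NUES}, for every $t\geq s$,
\begin{equation*}
\begin{array}{rcl}
\|\Phi_{A+\frac{\mathcal{L}}{2}I-BF}(t,s)\|&=&e^{\frac{\mathcal{L}}{2}(t-s)}\,\|\Phi_{A-BF}(t,s)\|,\\
&\leq& e^{\frac{\mathcal{L}}{2}(t-s)}\,M e^{(\theta_1+\theta_2)|s|}e^{-(\mathcal{L}-\theta_1)(t-s)},\\
&=& M e^{(\theta_1+\theta_2)|s|}e^{-\left(\frac{\mathcal{L}}{2}-\theta_1\right)(t-s)}.
\end{array}
\end{equation*}
Because $\mathcal{L}>2\theta_1$ gives $\frac{\mathcal{L}}{2}-\theta_1>0$, the factor in $(t-s)$ is strictly decaying, so this is exactly the forward NUES estimate \eqref{est-sof}, with decay rate $\frac{\mathcal{L}}{2}-\theta_1$, nonuniformity exponent $\theta_1+\theta_2$, and the same $M\geq1$; the constants $\theta_1=\mu_1+4\eta$ and $\theta_2=\eta+2(\mu_1+\mu_0)$ are inherited directly from Theorem \ref{T2}. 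I stress that the shift modifies only the $(t-s)$-exponent and leaves the nonuniform factor $e^{(\theta_1+\theta_2)|s|}$ unchanged, which is what makes the transfer clean.

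Should a self-contained argument be preferred, one that does not invoke the final estimate of Theorem \ref{T2}, the natural route is to apply Proposition \ref{T3} to $V=A+\frac{\mathcal{L}}{2}I-BF$ with the Lyapunov candidate $S=S_{\mathcal{L}}$ solving \eqref{ER} and $F=B^{T}S_{\mathcal{L}}$. Substituting $F$ into $\dot S+V^{T}S+SV$ and using \eqref{ER}, a completion of squares collapses this expression to $-(I+\mathcal{L}S)-S B B^{T} S$, which is $\leq-(I+\mathcal{L}S)$; this is precisely hypothesis \eqref{RiccatiS}, so Proposition \ref{T3} returns \eqref{est-sof} with $\phi_1=\theta_1$ and $\phi_2=\theta_2$. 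In this version the only substantial step---and the one I expect to be the main obstacle---is the two-sided bound \eqref{CotaparaS} on $S_{\mathcal{L}}$ carrying the exponents $\phi_1=\theta_1$, $\phi_2=\theta_2$: this is where the nonuniform Gramian bounds \eqref{alpha0alpha1enR} (hypothesis \textbf{(H2)}), the nonuniform bounded growth \textbf{(H1)}, and Lemma \ref{evolutionoperatorresult} must be combined, and where the replacement of $t$ by $|t|$ on the whole real line genuinely matters. The shift argument above avoids this by delegating the work to Theorem \ref{T2}, which is why I would adopt it as the primary proof.
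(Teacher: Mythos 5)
Your primary argument is circular within the architecture of this paper. The estimate \eqref{est-sof} is not a consequence of Theorem \ref{T2}; it is the step from which Theorem \ref{T2} is deduced. Immediately after this lemma the paper writes the identity $\Phi_{A-BF+\frac{\mathcal{L}}{2}I}(t,s)=\Phi_{A-BF}(t,s)e^{\frac{\mathcal{L}}{2}(t-s)}$ and combines it with \eqref{est-sof} to obtain \eqref{NUES}, concluding ``and the Theorem \ref{T2} follows.'' Your proof runs this same one-line identity in the opposite direction, assuming \eqref{NUES} to get \eqref{est-sof}. Since the two estimates are equivalent under the shift identity, transporting one into the other in either direction carries no content; the substance of the lemma lies in establishing either estimate from the hypotheses (NUCC, \textbf{(H1)}, \textbf{(H2)}, and the Riccati solution $S_{\mathcal{L}}$), which your primary route never does. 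A small but telling symptom: your computation produces the exponent $-\bigl(\frac{\mathcal{L}}{2}-\theta_1\bigr)(t-s)$, whereas \eqref{est-sof} as printed carries a positive sign; the negative sign is indeed what the subsequent derivation of \eqref{NUES} requires, so the paper has a sign typo there, but you should have flagged the discrepancy rather than declaring the match exact.

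Your fallback sketch is the correct route and is essentially what the paper intends (the paper itself omits the details, deferring to Section 4 of \cite{HMR}): take $F=B^{T}S_{\mathcal{L}}$, verify by the completion-of-squares computation that $\dot S_{\mathcal{L}}+V^{T}S_{\mathcal{L}}+S_{\mathcal{L}}V=-I-\mathcal{L}S_{\mathcal{L}}-S_{\mathcal{L}}BB^{T}S_{\mathcal{L}}\leq-(I+\mathcal{L}S_{\mathcal{L}})$ for $V=A+\frac{\mathcal{L}}{2}I-BF$, and invoke Proposition \ref{T3} with $\phi_1=\theta_1$, $\phi_2=\theta_2$. Your algebra there is right, and you correctly identify that the only substantive missing step is the two-sided bound \eqref{CotaparaS} on $S_{\mathcal{L}}$ with precisely those exponents, which must be assembled from Lemma \ref{L1} (the shifted open-loop system \eqref{LTV-l} remains NUCC), the Gramian bounds \eqref{alpha0alpha1enR}, and Lemma \ref{evolutionoperatorresult}. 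But identifying the gap is not the same as closing it: as submitted, the only complete argument you offer is the circular one, and the valid one remains a sketch. Promote the fallback to the main proof and supply the bound on $S_{\mathcal{L}}$.
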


\medskip

Finally, by using the identity $$\Phi_{A-BF+\frac{\mathcal{L}}{2}I}(t,s)=\Phi_{A-BF}(t,s)e^{\frac{\mathcal{L}}{2}(t-s)}$$ 
combined with (\ref{est-sof}), it follows that
\begin{displaymath}
\|\Phi_{A-BF}(t,s)\|\leq Me^{(\theta_{1}+\theta_{2})|s|}e^{-(\mathcal{L}-\theta_1)(t-s)} \quad \textnormal{for all $t\geq s$},
\end{displaymath}
and the Theorem \ref{T2} follows.


\subsection{Proof of main result} By considering the extension of the stabilizability results and the close relationship between controllability and observability by means of the dual system, we present the proof of Theorem \ref{ObsImplicaDetect}. Figure \ref{fig-MainTheoProof} illustrates the scheme of the proof.

\begin{proof}
Using the Theorem \ref{dualNU}, we have that the nonuniform complete observability on $\mathbb{R}$ of the system \eqref{control1a}-\eqref{control1b} is equivalent to the nonuniform complete controllability on $\mathbb{R}$ of its dual system 
\begin{equation}
\label{dualAC}
    \dot{x}(t)=A^{T}(-t)x(t)+C^{T}(-t)u(t).
\end{equation}

Subsequently, based on Theorem \ref{T2} it follows that the nonuniform complete controllability of \eqref{dualAC} implies the stabilizability of the system, i.e., there exists a feedback gain $F(-t)$ such that the system
\begin{equation}
\label{estabilizabilidadAC}
\dot{x}(t)=(A^{T}(-t)-C^{T}(-t)F(-t))x(t)    
\end{equation}
is nonuniformly exponentially stable forward. In addition, by considering the Theorem \ref{EstabilidadOriginalAdjunto}, we can ensure that the system
\begin{equation}
    \dot{x}(t)=(A(t)-F^{T}(t)C(t))x(t)
\end{equation}
is nonuniformly exponentially stable forward. Finally, if we define $L(t)=F^{T}(t)$, we obtain that the system 
$$\dot{x}(t)=(A(t)-L(t)C(t))x(t)$$
is nonuniformly exponentially stable forward, which proves the nonuniform exponential detectability of the system \eqref{control1a}-\eqref{control1b}.
\end{proof}


\begin{remark}
It is important to highlight that based on the multiple behaviors existing in the nonautonomous context; bounded growth and stability, it makes sense to ask ourselves if it is possible to generalize the results obtained in this work to new contexts, either thinking about the close relationship between controllability and observability, or about the possibility of connecting stabilizability with detectability.     
\end{remark}


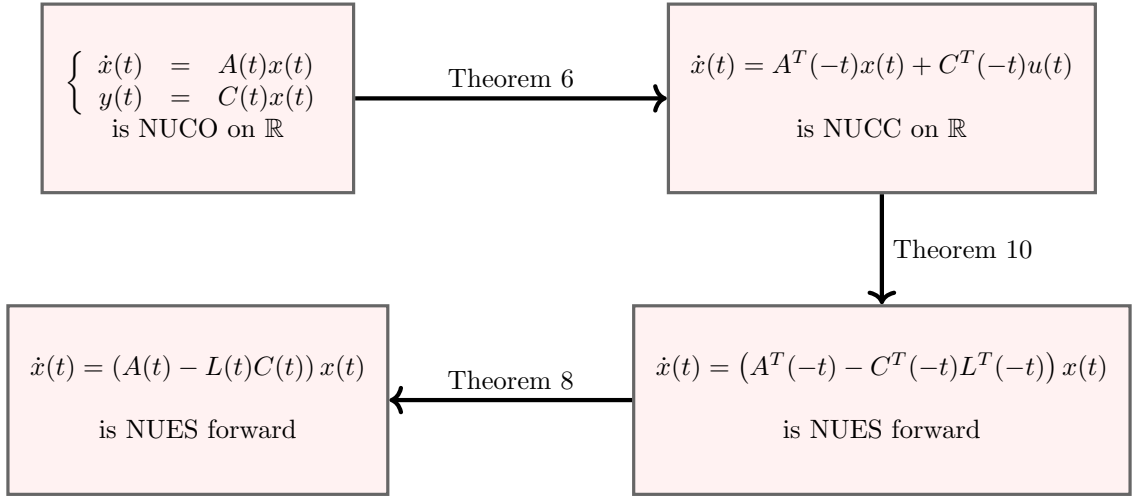
\begin{figure*}[h!]

\centering
\begin{tikzpicture}
[squarednode/.style={rectangle, draw=black!60, fill=red!5, very thick, minimum size=25mm},]
\node[squarednode]   at (0,0)   (11)                              {$\begin{array}{c}
\left\{\begin{array}{rcl}
\dot{x}(t)&=&A(t)x(t)\\
y(t)&=&C(t)x(t)
\end{array}\right.\\
\textnormal{is NUCO on } \mathbb{R}
\end{array}$
};

\node[squarednode]   at (9,0)    (12)        {$\begin{array}{c}
\dot{x}(t)=A^{T}(-t)x(t)+C^{T}(-t)u(t)\\\\
\textnormal{is NUCC on } \mathbb{R}  
\end{array}$
};

\node[squarednode]   at (0,-4)   (21)        {$\begin{array}{c}
\dot{x}(t)=\left(A(t) -L(t)C(t)\right)x(t)\\\\
\textnormal{is NUES forward}   
\end{array}
$
};

\node[squarednode]  at (9,-4)    (22)        {$\begin{array}{c}
\dot{x}(t)=\left(A^T(-t) - C^T(-t)L^T(-t)\right)x(t)\\\\
\textnormal{is NUES forward} 
\end{array}$
};

\draw[->,ultra thick] (11) -- (12)
node[midway, above] {Theorem \ref{dualNU}};

\draw[->,ultra thick] (12) -- (22)
node[midway, right] {Theorem \ref{T2}};

\draw[->,ultra thick] (22) -- (21)
node[midway, above] {Theorem \ref{EstabilidadOriginalAdjunto}};

\end{tikzpicture}

\caption{Graphical scheme of the proof of Theorem \ref{ObsImplicaDetect}.}
\label{fig-MainTheoProof}
\end{figure*}

\subsection{A counterexample}

This section will present an example of a planar system that will be nonuniformly exponentially detectable but will not verify the condition of nonuniform complete observability. In other words, the following example shows that Theorem \ref{ObsImplicaDetect} is not an equivalence. 

\begin{example}
    Consider the following control system: 
    \begin{subequations}
  \begin{empheq}[left=\empheqlbrace]{align}
    & \dot{x}(t)=A(t)x(t), 
     \label{Ejemplofinal1a} 
    \\
    & y(t)=C(t)x(t), 
     \label{Ejemplofinal1b}
  \end{empheq}
\end{subequations}
where 
\begin{equation*}
A(t)=
\begin{pmatrix}
-h(t) & 0\\
0 & h(t)
\end{pmatrix},
\quad  C(t)= \begin{pmatrix}
0 & 1
\end{pmatrix}
 \end{equation*}
and $h(t)=t^2$. To begin with, we will verify that \eqref{Ejemplofinal1a} does not satisfy the nonuniform Kalman condition, and based on Theorem \textnormal{\ref{p2l3obsNOUNIFORME}}, the system \eqref{Ejemplofinal1a}-\eqref{Ejemplofinal1b} is not nonuniformly completely observable. 

In fact, if we denote $\Phi_{A}(t,s)$ as the transition matrix of the system \eqref{Ejemplofinal1a}, then we will have that
$$\Phi_{A}(t,s)=\begin{pmatrix}
e^{\frac{s^3-t^3}{3}} & 0\\
0 & e^{\frac{t^3-s^3}{3}}
\end{pmatrix}.$$
We can conclude that $\|\Phi_{A}(t,s)\|=e^{\frac{|t^3-s^3|}{3}}$. Suppose that this system satisfies the nonuniform Kalman condition, i.e., there exist $\nu\geq0$ and $\alpha\in\mathcal{B}$ such that for any $t, s\in\mathbb{R}$:
$$e^{\frac{|t^3-s^3|}{3}}\leq  e^{\nu|s|}\alpha(|t-s|),$$
which is equivalent to
$$\begin{array}{rcl}\displaystyle\frac{|t-s|(t^2+ts+s^2)}{3}&\leq& \ln(\alpha(|t-s|))+\nu|s|,\\
|t-s|&\leq&  \displaystyle\frac{3\ln(\alpha(|t-s|))+3\nu|s|}{t^2+ts+s^2}.
\end{array}
$$

If we consider  $t=a_n$, $s=a_n-1$, where $a_n\to+\infty$ when $n\to+\infty$, we will have that the last inequality will be
$$1\leq \displaystyle+\frac{3\ln(\alpha(1))+3\nu|a_n-1|}{a_n^2+a_n(a_n-1)+(a_n-1)^2}$$
and when $n\to+\infty$ we obtain a contradiction. Similarly, if $t\leq s$ is considered, defining $t=a_{n}-1$ and $s=a_n$ leads again to a contradiction.


On the other hand, we now look for an observer $L(t)$ such that the system
\begin{equation}
\label{sistemaerrorejemplo}
\dot{e}(t)=(A(t)-L(t)C(t))e(t)
\end{equation}
is nonuniformly exponentially stable on $\mathbb{R}$. In fact, by choosing 
$$L(t)=\begin{pmatrix}
    0\\
    -2h(t)
\end{pmatrix},$$
we obtain that
$$A(t)-L(t)C(t)=\begin{pmatrix}
-h(t) & 0\\
0 & -h(t)
\end{pmatrix},$$
then we deduce that
$$\Phi_{A-LC}(t,s)=\begin{pmatrix}
e^{\frac{s^3-t^3}{3}} & 0\\
0 & e^{\frac{s^3-t^3}{3}}
\end{pmatrix}.$$

We want to prove that there exist $K\geq1$, $\alpha>0$ and $\mu\geq0$ such that for any $t\geq s$ it is verified that
$$\|\Phi_{A-LC}(t,s)\|\leq Ke^{-\alpha(t-s)+\mu|s|},$$
which is equivalent to prove that
$$-(t-s)(t^2+ts+s^2)\leq 3(\ln(K)-\alpha(t-s)+\mu|s|).$$

If we consider $K=e^3$, $\alpha=\mu=\frac{1}{3}$, the we will prove that for any $t>s$:
$$\begin{array}{rcl}
-(t-s)(t^2+ts+s^2)&\leq& 1-(t-s)+|s|,\\ t^2+ts+s^2&\geq&\displaystyle1-\frac{(|s|+1)}{t-s}.
\end{array}$$

In order to facilitate the writing process, for any $s\in\mathbb{R}$ fixed, we define the following functions:
$$f_{s}(t)=t^2+ts+s^2\quad \textnormal{and}\quad g_{s}(t)=1-\displaystyle\frac{(|s|+1)}{t-s}
$$
and then we will develop the proof, which will be divided into three cases.
\begin{itemize}
    \item \underline{Case 1}: If $s\in\mathbb{R}$ fixed and $1-\displaystyle\frac{3s^2}{4}\leq0$, then
    $$1\leq\frac{3s^2}{4}\leq \frac{3s^2}{4}+\left(t+\frac{s}{2}\right)^2=f_{s}(t),$$
and for all $t>s$ we have that $g_{s}(t)<1$, which allows us to conclude that for any $t>s$, $g_{s}(t)<f_{s}(t)$.

\item \underline{Case 2}: If $s<0$ and $1-\displaystyle\frac{3s^2}{4}>0$, that means $s\in\left(-\frac{2}{\sqrt{3}},0\right)$, then we obtain that 
$$g_{s}\left(-\frac{s}{2}\right)=\frac{s+2}{3s}<0<\displaystyle\frac{3s^2}{4}=f_{s}\left(-\frac{s}{2}\right).$$
In other words, by considering that the function $g_{s}(\cdot)$ is increasing on $\left(s,-\frac{s}{2}\right)$ and $f_{s}\left(\displaystyle-\frac{s}{2}\right)$ corresponds to the smallest image of $f_{s}$, then on the interval $\left(-\displaystyle\frac{s}{2},+\infty\right)$ we have that $f_{s}$ grows faster than $g_{s}$, then for any $t>s$, we have $g_{s}(t)<f_{s}(t)$.

\item \underline{Case 3}: If $s>0$ and $\displaystyle1-\frac{3s^2}{4}>0$, that means $s\in\left(0,\frac{2}{\sqrt{3}}\right)$, then the point $t_0=|s|+1+s=2s+1>s$ satisfies $g_{s}(t_0)=0$. After that, we note that $t_0>1$, then
$$f_{s}(t_0)=t_0^{2}+t_0s+s^2>1.$$
Besides, by considering $f_{s}$ is increasing on $\left(-\frac{s}{2},+\infty\right)$, $g_{s}$ is increasing on $(s,+\infty)$ and 
$t_0\in\left(-\frac{s}{2},+\infty\right)$, then for any $t\in\left(-\frac{s}{2},t_0\right]$ it is verified that
$$g_{s}(t)<0<f_{s}(t)$$
and if $t\in[t_0,+\infty)$, then
$$g_{s}(t)<1<f_{s}(t).$$
\end{itemize}

Based on the three cases analyzed, we conclude as requested, which allows us to conclude that the system \eqref{sistemaerrorejemplo} is nonuniformly exponentially detectable.
\end{example}





\begin{remark}
From the previous example and Theorem \textnormal{\ref{dualNU}} it is possible to construct a control system that does not verify the nonuniform complete controllability property but is nonuniformly completely stabilizable, proving that Theorem \textnormal{26} in \textnormal{\cite{HMR}} is not an equivalence in this context. 
\end{remark}


\section{Conclusions}

This paper has presented new results on observability and detectability of linear time-varying systems in a nonuniform setting. It put together classical results introduced by R. Kalman (controllability and observability) with more recent techniques for analyzing time-varying differential equations (the bounded growth property). It was shown that nonuniform complete observability defines a new class of time-varying linear systems that includes uniform complete observable time-varying linear systems but is strictly smaller than complete observable time-varying linear systems.
Furthermore, generalizing results from the work \cite{HMR}, which relate nonuniform controllability, Riccati matrix differential equations and nonuniform exponential stability, and proving the dual relationship between controllability and observability in the nonuniform framework, this work established that nonuniform complete observability guarantees nonuniform exponential detectability. Possible extensions of these ideas may include analysis on nonlinear perturbations and robustness of the nonuniform complete observability and nonuniform complete controllability properties.

\section*{Acknowledgement}

The first author has been partially supported by the grant FONDECYT Regular 1240361.

\bibliographystyle{plain}        
\bibliography{samplebib}



\appendix

\end{document}